\documentclass[11pt,reqno]{amsart}
\usepackage{amsmath}
\usepackage{amssymb}
\usepackage{psfrag}
\usepackage{graphicx}
\usepackage{float}

\textwidth = 15.5 cm
\addtolength{\oddsidemargin}{-1.5 cm}
\addtolength{\evensidemargin}{-1.5 cm}

\newtheorem{theorem}{Theorem}
\newtheorem{prop}{Proposition}
\newtheorem{lemma}{Lemma}
\newtheorem{cor}{Corollary}

\theoremstyle{definition}

\newtheorem{defn}{Definition}

\newtheorem{remark}{Remark}

\def\I{\mathcal I}
\def\i{\underline i}
\def\j{\underline \iota}
\def\D{\frak D}

\def\l{\mathcal L}

\def\N{\mathbb N}
\def\R{\mathbb R}

\def\CC{\mathbb C}

\def\phi{\varphi}
\def\rho{\varrho}
\def\epsilon{\varepsilon}
\def\i{\underline i}

\begin{document}

\newcounter{algnum}
\newcounter{step}
\newtheorem{alg}{Algorithm}

\newenvironment{algorithm}{\begin{alg}\end{alg}}

\title[Effective bounds on Hausdorff dimension]{Rigorous effective bounds on the Hausdorff dimension of 
continued fraction Cantor sets: a hundred decimal digits for the dimension of $E_2$}

\author{O. Jenkinson \& M. Pollicott}

\begin{abstract}
We prove that the algorithm of \cite{jp} for approximating the Hausdorff dimension of dynamically defined Cantor sets, using periodic points of the underlying dynamical system, can be used to establish completely rigorous high accuracy bounds on the dimension. The effectiveness of these rigorous estimates is illustrated for Cantor sets consisting of continued fraction expansions with restricted digits. For example the Hausdorff dimension of the set $E_2$ (of those reals whose continued fraction expansion only contains digits 1 and 2) can be rigorously approximated, with an accuracy of over 100 decimal places, using points of period up to 25.

The method for establishing rigorous dimension bounds involves
the holomorphic extension of mappings associated to the allowed continued fraction digits,
 an appropriate disc which is contracted by these mappings, 
and an associated transfer operator acting on the 
Hilbert Hardy space of analytic functions on this disc. We introduce
methods for  
rigorously bounding the approximation numbers for the transfer operators, showing that this leads to effective estimates on the Taylor coefficients of the 
associated determinant, and hence to explicit bounds on the Hausdorff dimension. 
\end{abstract}

\maketitle

\section{Introduction}

For a finite subset $A\subset \N$, let $E_A$ denote the set of all $x\in(0,1)$ such that the digits
$a_1(x), a_2(x),\ldots$ in the continued fraction expansion 
$$
x = 
[a_1(x), a_2(x), a_3(x), \ldots ]
= 
\frac{1}{a_1(x) + \frac{1}{a_2(x) + 
\frac{1}{a_3(x) + \cdots}
}}
$$
all belong to $A$.
Sets of the form $E_A$ are said to be of \emph{bounded type} (see e.g.~\cite{kontorovich, shallit});
in particular they are Cantor sets, and study of their Hausdorff dimension has attracted significant attention.

Of particular interest have been the sets $E_n=E_{\{1,\ldots,n\}}$,
with $E_2=E_{\{1,2\}}$ the most studied of these, serving as a test case for various general methods
of approximating Hausdorff dimension. 
Jarnik \cite{jarnik} showed that $\text{dim}(E_2)>1/4$,
while Good \cite{good} improved this to $0.5306  < \text{dim}(E_2)<0.5320$,
Bumby \cite{bumby2}
showed that $0.5312 < \text{dim}(E_2) < 0.5314$,
Hensley \cite{hensley1989} showed that
$0.53128049$ $< \text{dim}(E_2) < 0.53128051$,
while
Falk \& Nussbaum \cite{falknussbaum}\footnote{This preprint has been split into the two articles
\cite{falknussbaum1} and \cite{falknussbaum2}, with \cite{falknussbaum1} containing the approximation to
$\text{dim}(E_2)$.} 
rigorously justified the first 8 decimal digits of $\text{dim}(E_2)$,
proving that
$0.531280505981423 \le \text{dim}(E_2) \le 0.531280506343388\,.$
A common element in the methods \cite{bumby2, falknussbaum, hensley1989} is 
the study of a transfer operator, while for the higher accuracy estimates 
\cite{falknussbaum, hensley1989} there is some element of
computer-assistance involved in the proof.

In \cite{jp} we outlined a different approach to approximating the Hausdorff dimension of bounded type sets,
again using a transfer operator, but exploiting the real analyticity of the maps defining continued fractions to
consider the determinant $\Delta$ of the operator, and its
 approximation in terms of periodic points\footnote{The periodic points are precisely those numbers in $(0,1)$
 with periodic continued fraction expansion, drawn from digits in $A$.
 The reliance on periodic points renders the method \emph{canonical}, inasmuch as it does not involve any arbitrary choice of coordinates or partition of the space.}
  of an underlying dynamical system.
While some highly accurate \emph{empirical} estimates of Hausdorff dimension were given, for example a 25 decimal digit approximation to $\text{dim}(E_2)$, these were not rigorously justified. Moreover, although the 
algorithm was proved to generate a sequence of approximations $s_n$ to the Hausdorff dimension
(depending on points of period up to $n$),
with convergence rate faster than any exponential, 
the derived error bounds were sufficiently conservative (see Remark \ref{conservative} below) that it was unclear
whether  they could be combined with 
the computed approximations to yield any effective \emph{rigorous} estimate.

In the current paper we investigate the possibility of sharpening the approach of \cite{jp} 
so as to obtain rigorous computer-assisted estimates on $\text{dim}(E_A)$, with particular focus on $E_2$.
There are several ingredients in this sharpening.
The first step is to locate a disc $D$ in the complex plane with the property that
the images of $D$ under the mappings $T_n(z)=1/(z+n)$, $n\in A$, are contained in $D$.
It then turns out to be preferable to consider the transfer operator as acting on a \emph{Hilbert} space of analytic functions on $D$, rather than the Banach space of \cite{jp};
this facilitates an estimate on the Taylor coefficients of $\Delta$ in terms of the \emph{approximation numbers}
(or \emph{singular values}) of the operator, which is significantly better than those bounds derived from Banach space methods. 
The specific Hilbert space used is Hardy space, consisting of those analytic functions on the disc which extend as $L^2$
functions on the bounding circle.
The contraction of $D$ by the mappings $T_n(z)=1/(n+z)$, $n\in A$,
prompts the introduction of the \emph{contraction ratio}, which captures
the strength of this contraction, and leads to estimates on the convergence of
the approximations to the Hausdorff dimension.
The $n^{th}$ Taylor series coefficient of $\Delta$ can be expressed in terms of periodic points of period up to $n$, and for sufficiently small $n$ these can be evaluated
exactly,  to arbitrary precision.
For larger $n$, we show it is advantageous to
obtain two distinct types of upper bound on the Taylor coefficients: we refer to these
as the \emph{Euler bound} and the \emph{computed Taylor bound}.
The Euler bound is used for all sufficiently large $n$, while the
computed Taylor bound is used for a finite intermediate range of $n$
corresponding to those Taylor coefficients which are deemed to be computationally inaccessible,
but 
where the Euler bound is insufficiently sharp.
Intrinsic to the definition of the computed Taylor bounds is the sequence of
\emph{computed approximation bounds}, which we introduce as 
computationally accessible upper bounds on
the approximation numbers 
of the transfer operator.

As an example of the effectiveness of the resulting method we rigorously justify the 
first 100 decimal digits\footnote{The choice of 100 decimal digits in the present article is motivated by a number of factors.
On the one hand 100 is considered a particularly round number, and an order of magnitude larger than  the number of decimal digits obtained (even non-rigorously) for the dimension of $E_2$ in previous works.
On the other hand, readily available computer resources (namely, a program written in Mathematica running on a modestly equipped laptop) performed the necessary calculations, in particular the high accuracy evaluation of points of period up to 25, in a reasonable timeframe (approximately one day), and it turns out that this choice of maximum period is sufficient to rigorously justify 100 decimal digits.}
of the Hausdorff dimension of $E_2$, 
thereby improving on the rigorous estimates in \cite{bumby2, falknussbaum, good, hensley1989, jarnik}.
Specifically, we prove (see Theorem \ref{e2theorem}) that 
\begin{align*}
\text{dim}(E_2) =
0.&53128050627720514162446864736847178549305910901839 \\
& 87798883978039275295356438313459181095701811852398\ldots \,,
\end{align*}
 using the 
  periodic points of period up to 25.


\section{Preliminaries}

In this section we collect a number of results (see also \cite{jp}) which underpin our algorithm for approximating Hausdorff dimension.

\subsection{Continued fractions}

Let $E_A$ denote the set of all $x\in(0,1)$ such that the digits
$a_1(x), a_2(x),\ldots$ in the continued fraction expansion 
$$
x = 
[a_1(x), a_2(x), a_3(x), \ldots ]
= 
\frac{1}{a_1(x) + \frac{1}{a_2(x) + 
\frac{1}{a_3(x) + \cdots}
}}
$$
all belong to $A$.
For any $i\in\N$ we define the map $T_i$ by
$$
T_i(x)=\frac{1}{i+x}\,,
$$
and for a given $A\subset \N$, the collection $\{T_i:i\in A\}$ is referred to as the corresponding
\emph{iterated function system}.
Its \emph{limit set}, consisting of limit points of sequences
$T_{i_1}\circ \cdots\circ T_{i_n}(0)$, where each $i_j\in A$, is precisely the set $E_A$.

Every set $E_A$ is invariant under the \emph{Gauss map} $T$, defined by
$$
T(x) = \frac{1}{x} \pmod 1\,.
$$

\subsection{Hausdorff dimension}

For a set $E \subset \R$,
define
$$
H_\epsilon^\delta(E) = \inf
\left\{\sum_i\text{diam}(U_i)^\delta : \text{$\mathcal{U} = \{U_i\}$ is an open cover of $E$ such that each $\text{diam}(U_i) \leq \epsilon$}\right\}\,,
$$
and set
$H^\delta(E) =\lim_{\epsilon \to 0} H_\epsilon^\delta(E)$.
The \emph{Hausdorff dimension} $\text{dim}(E)$
is then defined as
$$
\text{dim}(E) = \inf\{\delta \hbox{ : } H^\delta(E) = 0\} \,.
$$

\subsection{Pressure formula}

For a continuous function $f: E_A \to \R$,
its \emph{pressure} $P(f)$ 
is given by
$$
P(f) = \lim_{n\to +\infty}\frac{1}{n}
\log \left(\sum_{T^nx=x \atop x\in E_A}
e^{f(x) + f(Tx) + \ldots + f(T^{n-1}x)}
\right)\,,
$$
and if
$f = -s\log|T'|$ then we have the following implicit characterisation of the Hausdorff
dimension of $E_A$ (see \cite{bedford, bowen, falconer, mauldinurbanski}):

\begin{lemma}\label{pressurelemma}
The function $s\mapsto P(-s\log |T'|)$ is strictly decreasing,
with a unique zero at $s=\text{dim}(E_A)$.
\end{lemma}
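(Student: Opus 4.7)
The plan is to deduce the lemma from Bowen's formula for conformal expanding repellers, in the standard manner treated in the cited references \cite{bedford, bowen, falconer, mauldinurbanski}. Since $A\subset\N$ is finite, the restriction of the Gauss map $T$ to the invariant Cantor set $E_A$ is a uniformly expanding Markov map: each inverse branch $T_i(x)=1/(i+x)$ is a strict holomorphic contraction of the unit interval, so there exist $N\in\N$ and $\lambda>0$ with $\inf_{E_A}\log|(T^N)'|\ge N\lambda$, together with the usual Koebe-type bounded distortion.

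For strict monotonicity I would apply the variational principle
\[
P(g)=\sup_\mu\!\left(h_\mu(T)+\int g\,d\mu\right),
\]
where the supremum runs over $T$-invariant Borel probability measures on $E_A$. For $s_1<s_2$, choosing $\mu$ to be an equilibrium state for $-s_2\log|T'|$,
\[
P(-s_1\log|T'|)-P(-s_2\log|T'|)\ge (s_2-s_1)\int\log|T'|\,d\mu>0,
\]
with strict positivity a consequence of the uniform lower bound $\int\log|T'|\,d\mu\ge\lambda>0$ coming from the previous paragraph. Continuity of $s\mapsto P(-s\log|T'|)$ is automatic from the variational principle; the value at $s=0$ equals the topological entropy of $T|_{E_A}$, which is positive since $|A|\ge 2$, while $P(-s\log|T'|)\le h_{\mathrm{top}}(T|_{E_A})-s\inf_{E_A}\log|T'|\to-\infty$ as $s\to\infty$. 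A unique zero $s_*\in(0,\infty)$ therefore exists.

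The main step is to identify $s_*$ with $\dim(E_A)$. For this I would work with the natural Markov cover of $E_A$ by cylinders $J_{\underline i}=T_{i_1}\circ\cdots\circ T_{i_n}((0,1))$ for $\underline i=(i_1,\ldots,i_n)\in A^n$. Bounded distortion yields
\[
\mathrm{diam}(J_{\underline i})\asymp |(T_{i_1}\circ\cdots\circ T_{i_n})'(0)|\asymp \exp\bigl(-S_n\log|T'|(y)\bigr)
\]
for any $y\in J_{\underline i}$, where $S_n$ denotes the $n$th Birkhoff sum under $T$. If $s>s_*$ then $P(-s\log|T'|)<0$, so the partition sums decay exponentially and $\sum_n\sum_{\underline i\in A^n}\mathrm{diam}(J_{\underline i})^s<\infty$, giving $H^s(E_A)<\infty$ and hence $\dim(E_A)\le s_*$. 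For the reverse inequality I would invoke the Gibbs measure $\mu_s$ associated with the H\"older potential $-s\log|T'|$ for $s$ slightly below $s_*$; its Gibbs property furnishes, via the mass distribution principle, a positive lower bound on $H^s(E_A)$, whence $\dim(E_A)\ge s_*$.

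The main obstacle is the careful tracking of distortion constants so that the cylinder-diameter estimates and the Gibbs-measure bounds match the dynamical Birkhoff sums cleanly; this is where finiteness of $A$ and the real-analyticity of the $T_i$ are crucial, and where the references handle the details in greater generality than needed here.
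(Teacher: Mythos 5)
The paper gives no proof of this lemma; it is stated as a standard fact with citations to \cite{bedford, bowen, falconer, mauldinurbanski}. Your sketch is a correct outline of the classical Bowen-formula argument contained in those references: uniform expansion of $T|_{E_A}$ (which holds since $E_A$ is compactly contained in $(0,1)$ for finite $A$), strict monotonicity and the limiting behaviour of $s\mapsto P(-s\log|T'|)$ via the variational principle, the upper bound $\dim(E_A)\le s_*$ from cylinder covers with bounded distortion, and the matching lower bound from the Gibbs measure and the mass distribution principle — so your proposal is consistent with the route the cited sources take.
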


\subsection{Transfer operators}

For a given $A\subset \N$, and $s\in\R$, the \emph{transfer operator} $\l_{A,s}$, defined by
$$
\l_{A,s} f (z) = \sum_{i\in A} \frac{1}{(z+i)^{2s}}\, f\left(\frac{1}{z+i}\right) \,,
$$
preserves various natural function spaces, for example the Banach space of Lipschitz functions on $[0,1]$.
On this space it has a simple positive eigenvalue $e^{P(-s\log|T'|)}$, which is the unique eigenvalue whose modulus equals 
its spectral radius,
thus by Lemma \ref{pressurelemma} 
the Hausdorff dimension of $E_A$ is the unique value $s\in\R$ such that $\l_{A,s}$ has spectral radius equal to 1.

\subsection{Determinant}
The \emph{determinant} for $\l_{A,s}$ is the entire function defined for $z$ of sufficiently small modulus\footnote{The power series $\sum_{n=1}^\infty \frac{z^n}{n} \text{tr}(\l_{A,s}^n)$, and hence the expression (\ref{determinantformula}), is convergent for $|z|< e^{-P(-s\log|T'|)}$.}
by
\begin{equation}\label{determinantformula}
\Delta(z,s) = \exp -\sum_{n=1}^\infty \frac{z^n}{n} \text{tr}(\l_{A,s}^n) \,,
\end{equation}
and for other $z\in\mathbb{C}$ by analytic continuation;
here the trace $\text{tr}(\l_{A,s}^n)$ is given (see \cite{jp, ruelle}) by 
\begin{equation}\label{traceformula}
\text{tr}(\l_{A,s}^n) = \sum_{\i\in A^n}  \frac{|T_{\i}'(z_{\i})|^s}{1-T_{\i}'(z_{\i})} 
 = \sum_{\i\in A^n}  \frac{\prod_{j=0}^{n-1}T^j(z_{\i})^{2s}}{1-(-1)^n\prod_{j=0}^{n-1}T^j(z_{\i})^2 } 
\,,
\end{equation}
where the point $z_{\i}$, which has period $n$ under $T$, is the unique fixed point of the 
$n$-fold composition $T_{\i} = T_{i_1}\circ T_{i_2}\circ\cdots\circ T_{i_n}$.

When acting on a suitable space of holomorphic functions, the eigenvalues of $\l_{A,s}$ are precisely the reciprocals
of the zeros of its determinant.
In particular, the zero of minimum modulus for $\Delta(s,\cdot)$ is $e^{-P(-s\log|T'|)}$, so the Hausdorff dimension
of $E_A$ is characterised as the value of $s$ such that 1 is the zero of minimum modulus of $\Delta(s,\cdot)$.

In fact we shall later show that, when $\l_{A,s}$ acts on such a space of holomorphic functions, its 
approximation numbers decay at an exponential rate (see Corollary \ref{ks}), so that $\l_{A,s}$ belongs to an exponential class (cf.~\cite{bandtlow, bj}) and is in particular a trace class operator, from which the existence and above
properties of trace and determinant follow (see \cite{simon}).

As outlined in \cite{jp}, this suggests the possibility of expressing $\Delta(z,s)$ as a power series
$$
\Delta(z,s) = 1 + \sum_{n=1}^\infty \delta_n(s) z^n\,,
$$
then 
defining $\D$ by
$$
\D(s) := \Delta(1,s) =1 + \sum_{n=1}^\infty \delta_n(s) \,.
$$
The function $\D$ is an entire function of $s$ (see \cite{jp}),
and solutions $s$ of the equation
\begin{equation}\label{propereqn}
0 = 1+\sum_{n=1}^\infty \delta_n(s) = \D(s)
\end{equation}
have the property that the value 1 is an eigenvalue for $\l_{A,s}$;
in particular,
the unique zero of $\D$ in the interval $(0,1)$ is precisely $\text{dim}(E_A)$,
being the unique value of $s$ for which 1 is the eigenvalue of maximum modulus for $\l_{A,s}$.

As a result of the trace formula (\ref{traceformula}), the coefficients $\delta_n(s)$ are computable\footnote{By this we mean that for a given $s$, the $\delta_n(s)$ are computable exactly, to arbitrary precision.}
in terms of the periodic points of $T|_{E_A}$ of period no greater than $n$, so for some suitable
$N\in\N$, chosen so that $\delta_1(s),\ldots, \delta_N(s)$ can be computed to a given precision in reasonable time,
we can define $\D_N$ by
\begin{equation}\label{DNdefn}
\D_N(s) := 1+\sum_{n=1}^N \delta_n(s)\,.
\end{equation}
A solution to the equation
\begin{equation}\label{truncatedequation}
\D_N(s) = 0 
\end{equation}
will be an approximate solution to (\ref{propereqn}), where the quality of this approximation will be related to the 
smallness of the discarded tail
\begin{equation}\label{discardedtail}
\sum_{n=N+1}^\infty \delta_n(s)\,.
\end{equation}
In particular, any rigorous estimate of the closeness of a given approximate solution $s_N$ of (\ref{truncatedequation}) to the true Hausdorff dimension $\text{dim}(E_A)$ will require a rigorous upper bound
on the modulus of the tail (\ref{discardedtail}).

\begin{remark}\label{conservative}
In \cite{jp} we considered the set $E_2=E_{\{1,2\}}$ and, although the empirical estimates of its
Hausdorff dimension appeared convincing,
the estimate on the tail (\ref{discardedtail}) was not sharp enough to permit any effective rigorous bound.
Essentially\footnote{In \cite{jp} we actually worked with $\det(I-z\l_{A,s}^2)$ rather than
$\det(I-z\l_{A,s})$, though the methods there lead to very similar bounds for both determinants.}, the bound in \cite{jp} was
$
|\delta_n(s)| \le \epsilon_n := C K^n n^{n/2} \theta^{n(n+1)}
$
where
$
C= \gamma \prod_{r=1}^\infty(1-\gamma^r)^{-1} \approx 122979405533
$,
$
K= \frac{45}{16\pi} \approx 0.895247
$,
and
$
\theta = \left( \frac{8}{9}\right)^{1/4} \approx 0.970984
$.
Although the bounding sequence $\epsilon_n$ tends to zero, and does so at super-exponential rate $O(\theta^{n^2})$, 
the considerable inertia in this convergence 
(e.g.~the sequence increases for $1\le n\le 39$ to the value $\epsilon_{39}\approx 1.31235 \times 10^{22}$,
and remains larger than $1$ until
$n=85$)
renders the bound ineffective in practice,
in view of the exponentially increasing computation time required to calculate the $\delta_n(s)$
(as seen in this article, we can feasibly compute several million periodic points, but performing calculations involving more
 than $2^{85}$ points is out of the question).
\end{remark}

\begin{remark}
The specific rigorous approximation of dimension is performed in this article only for the set $E_2$
(see \S \ref{E2section}), corresponding to the iterated function system consisting of the maps $T_1(x)=1/(x+1)$
and $T_2(x)=1/(x+2)$.
In principle, however, it can be performed for arbitrary iterated function systems consisting of real analytic maps $T_1,\ldots, T_l$ satisfying the open set condition (i.e.~there exists a non-empty open set $U$ such that
$T_i(U)\cap T_j(U)=\emptyset$ for $i\neq j$, and $T_i(U)\subset U$ for all $i$).
In this setting the accuracy of our 
Hausdorff dimension estimate depends principally on the contractivity of the maps $T_i$ and the number $l$ of such maps, with stronger contraction and a smaller value of $l$ corresponding to increased accuracy.
Stronger contraction (as reflected by smallness of the \emph{contraction ratio} defined in \S \ref{contractionratiossubsection}) is associated with more rapid decay of the Taylor coefficients of the determinant
$\Delta(z,s)$, implying greater accuracy of the polynomial truncations, while for $l>2$ the time required to locate the points of period up to $n$ increases by a factor of roughly $(l/2)^n$ relative to the case $l=2$
(note that for \emph{infinite} iterated function systems, i.e.~$l=\infty$, our method is rarely applicable, since it is usually impossible to locate all period-$n$ points for a given $n$, though here non-rigorous approximations may be obtained by suitable approximation).
If the $T_i$ are not M\"obius maps then for practical purposes there is some minor decrease in the efficiency of our method: the compositions $T_{\underline i}$ are more highly nonlinear than in the M\"obius case, so evaluation of their fixed points typically takes slightly longer.
\end{remark}

\begin{remark}
Work of Cusick \cite{cusick1,cusick2}
on continuants with bounded digits characterised the Hausdorff dimension of $E_n=E_{\{1,\ldots,n\}}$ in terms of the abscissa of convergence of a certain Dirichlet series,
and Bumby \cite{bumby1, bumby2} showed that $0.5312 < \text{dim}(E_2) < 0.5314$.
Hensley \cite{hensley1989} obtained the bound
$0.53128049$ $< \text{dim}(E_2) < 0.53128051$ using a recursive procedure,
and in \cite[Thm.~3]{hensley1996} introduced a general approach for
approximating the Hausdorff dimension of $E_A$, obtaining in particular the empirical estimate
$\text{dim}(E_2)=
0.5312805062772051416\ldots$
\end{remark}

\section{Hilbert Hardy space, approximation numbers, approximation bounds}

In this section we introduce the Hilbert space upon which the transfer operator acts, then make the connection between
approximation numbers for the operator and Taylor
coefficients of its determinant, leading to so-called Euler bounds on these Taylor coefficients.

\subsection{Hardy space}

Let $D\subset \mathbb{C}$ be an open disc of radius $r$, centred at $c$.
The \emph{Hilbert Hardy space}
$H^2(D)$ consists of those functions $f$
which are holomorphic on $D$ and such that
$\sup_{\rho<r}
\int_0^{1} |f(c+\rho e^{2\pi it})|^2\, dt<\infty$.
The inner product on $H^2(D)$ is defined by
$(f,g)=
\int_0^{1} f(c+r e^{2\pi it}) \overline{g(c+r e^{2\pi it})}\, dt$,
which is well-defined since any element of $H^2(D)$
extends as an $L^2$ function of the boundary $\partial D$.
The norm of $f\in H^2(D)$ will be simply written as $\|f\|=(f,f)^{1/2}$.

An alternative characterisation of $H^2(D)$ 
(see e.g.~\cite{shapiro})
is as the set of functions $f$ which are holomorphic on $D$
and such that if $m_k(z)=r^{-k}(z-c)^k$ for $k\ge0$, then
$$
f = \sum_{k=0}^\infty \hat f(k)\, m_k
$$
where the sequence $(\hat f(k))_{k=0}^\infty$ is square summable.
The norm $\|f\|$ can then be expressed as
$$
\|f\|^2 = \sum_{k=0}^\infty |\hat f(k)|^2\,.
$$

\subsection{Approximation numbers}

Given a compact operator $L:H\to H$
on a Hilbert space $H$, its
$i^{th}$ \emph{approximation number} $s_i(L)$
is defined as
$$
s_i(L)=\inf\{ \| L-K \| : \text{rank}(K) \leq i-1 \} \,,
$$
so that in particular $s_1(L)=\| L \|$.

The following result exploits our Hilbert space setting,
and represents an improvement over analogous Banach space estimates
in \cite{jp} (where e.g.~a multiplicative factor $n^{n/2}$ reduces the quality of the bound on $|\delta_n(s)|$).

\begin{lemma}\label{gohberglemma}
If $\l_{A,s}:H^2(D)\to H^2(D)$, then the $n^{th}$ Taylor coefficient $\delta_n(s)$ of its determinant can be bounded by
\begin{equation}\label{Taylorapproximationbound}
|\delta_n(s)|
\leq
 \sum_{i_1 < \ldots < i_n}
\prod_{j=1}^n  s_{i_j}(\l_{A,s})\,.
\end{equation}
\end{lemma}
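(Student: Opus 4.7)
\medskip\noindent\textbf{Proof proposal.} The plan is to identify $\Delta(z,s)$ with the Fredholm determinant of $I-z\l_{A,s}$, use the exterior-power expansion of that determinant, and then bound the resulting trace by the trace norm of an exterior power computed through the singular value decomposition.

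First I would record that, as will be established later (Corollary \ref{ks}), the approximation numbers of $\l_{A,s}:H^2(D)\to H^2(D)$ decay exponentially, so $\l_{A,s}$ is trace class. Hence the identity $\log\det(I-zL)=-\sum_{n\ge 1}\frac{z^n}{n}\mathrm{tr}(L^n)$, valid for trace class operators on a Hilbert space, combined with (\ref{determinantformula}) gives $\Delta(z,s)=\det(I-z\l_{A,s})$. The standard Fredholm expansion
\[
\det(I-zL)=\sum_{n=0}^{\infty}(-z)^n\,\mathrm{tr}(\Lambda^n L),
\]
where $\Lambda^n L$ is the induced operator on the $n$-fold antisymmetric tensor power of $H^2(D)$, then identifies the Taylor coefficient as $\delta_n(s)=(-1)^n\,\mathrm{tr}(\Lambda^n\l_{A,s})$, so in particular $|\delta_n(s)|=|\mathrm{tr}(\Lambda^n\l_{A,s})|$.

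Next I would invoke the Schmidt decomposition: there exist orthonormal sequences $\{e_i\}$, $\{f_i\}$ in $H^2(D)$ with $\l_{A,s}g=\sum_i s_i(\l_{A,s})\,(g,e_i)\,f_i$. Because $\{e_{i_1}\wedge\cdots\wedge e_{i_n}:i_1<\cdots<i_n\}$ and $\{f_{i_1}\wedge\cdots\wedge f_{i_n}:i_1<\cdots<i_n\}$ are orthonormal bases of the antisymmetric tensor power, a direct computation shows that $\Lambda^n\l_{A,s}$ has Schmidt decomposition with singular values precisely the products $\prod_{j=1}^{n}s_{i_j}(\l_{A,s})$ indexed by $i_1<\cdots<i_n$. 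Applying the trace-norm bound $|\mathrm{tr}(T)|\le\|T\|_1=\sum_i s_i(T)$ to $T=\Lambda^n\l_{A,s}$ then yields
\[
|\delta_n(s)|=|\mathrm{tr}(\Lambda^n\l_{A,s})|\le\|\Lambda^n\l_{A,s}\|_1=\sum_{i_1<\cdots<i_n}\prod_{j=1}^{n}s_{i_j}(\l_{A,s}),
\]
which is (\ref{Taylorapproximationbound}).

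Each ingredient above is a classical fact from the Gohberg--Krein theory of trace ideals on Hilbert space (see e.g.~Simon's monograph referenced as \cite{simon} in the paper), so I expect no genuine obstacle in the proof itself; the \emph{content} of the lemma is simply that one may apply these Hilbert-space tools in the present setting. The real gain is conceptual, in comparison with the Banach-space treatment of \cite{jp}: the identity between singular values of $\Lambda^n L$ and products of singular values of $L$ is sharp on Hilbert space, whereas the corresponding Banach-space route passes through Hadamard's inequality and incurs a spurious factor of order $n^{n/2}$ in the bound on $|\delta_n(s)|$. The only point requiring care is the appeal to trace class membership of $\l_{A,s}$, and this is legitimate since the forward reference to Corollary \ref{ks} establishes exponential decay of the $s_i(\l_{A,s})$ independently of the present lemma.
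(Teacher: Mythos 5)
Your proof is correct, and it takes a genuinely different route from the one in the paper. The paper's proof starts from the spectral expansion $\delta_n(s)=\sum_{i_1<\cdots<i_n}\prod_{j=1}^n\lambda_{i_j}(s)$ of the Taylor coefficient in terms of the eigenvalue sequence (quoted from \cite[Lem.~3.3]{simon}), and then converts eigenvalues into approximation numbers in a single step by invoking the generalized Weyl inequality \cite[Cor.~VI.2.6]{ggk}, namely $\bigl|\sum_{i_1<\cdots<i_n}\prod_j\lambda_{i_j}\bigr|\le\sum_{i_1<\cdots<i_n}\prod_j s_{i_j}$. You bypass the eigenvalues entirely: you identify $\delta_n(s)=(-1)^n\operatorname{tr}(\Lambda^n\l_{A,s})$ via the Plemelj--Smithies expansion of the Fredholm determinant, bound the trace by the trace norm, and read off $\|\Lambda^n\l_{A,s}\|_1=\sum_{i_1<\cdots<i_n}\prod_j s_{i_j}(\l_{A,s})$ from the Schmidt decomposition of the exterior power. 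In substance your argument is a proof of the very inequality the paper cites, so the two carry the same mathematical content; yours is more self-contained and makes transparent why the singular-value bound is sharp on Hilbert space (which is exactly the improvement over the Banach-space route of \cite{jp}), while the paper's version is shorter because it delegates both the eigenvalue expansion and the Weyl-type inequality to references. The only step in your version that merits slightly more care is the assertion that $\Lambda^n\l_{A,s}$ inherits the stated Schmidt decomposition: your direct computation does work (since $\Lambda^n L$ annihilates any wedge containing a vector orthogonal to the input Schmidt system, while on wedges of the $e_i$'s it acts diagonally), but the cleanest justification is via the polar decomposition $\l_{A,s}=U|\l_{A,s}|$ and the multiplicativity $\Lambda^n(AB)=(\Lambda^n A)(\Lambda^n B)$.
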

\begin{proof}
If $\{\lambda_n(s)\}$ is the eigenvalue sequence for $\l_{A,s}$, 
 ordered by
decreasing modulus and counting algebraic multiplicities,
then
(see e.g.~\cite[Lem.~3.3]{simon}) we have
$$\delta_n(s)
= \sum_{i_1 < \ldots < i_n}
\prod_{j=1}^n
\lambda_{i_j}(s)
\,,$$
and
\begin{equation*}
\left|\sum_{i_1 < \ldots < i_n}
\prod_{j=1}^n
\lambda_{i_j}(s)
\right|
\leq
 \sum_{i_1 < \ldots < i_n}
 \prod_{j=1}^n  s_{i_j}(\l_{A,s})
\end{equation*}
by \cite[Cor.~VI.2.6]{ggk}, so
the result follows.
\end{proof}


In view of the link between Hausdorff dimension
error estimates and the tail (\ref{discardedtail}),
together with the bounding of terms in this tail by sums of products of approximation numbers provided by Lemma \ref{gohberglemma},
it will be important to establish upper bounds on the 
Taylor coefficients $\delta_n(s)$ for those $n$ where it is not computationally feasible to evaluate 
exactly
via periodic points.
We shall derive two distinct types of such upper bound, 
which we refer to as \emph{Euler bounds} and \emph{computed Taylor bounds}.
There is an Euler bound on $\delta_n(s)$ for each $n$, given as a simple closed form; 
this bound will be used for all sufficiently large values of $n$,
though for low values of $n$ may be too conservative for our purposes.
The finitely many
computed Taylor bounds
will be on the Taylor coefficients $\delta_{P+1}(s),\ldots, \delta_Q(s)$ where $P$ is the largest integer
for which we locate all period-$P$ points, and $Q$ is chosen so that the Euler bounds on $|\delta_n(s)|$ are
sufficiently sharp when $n>Q$.
 In view of Lemma \ref{gohberglemma}, the computed Taylor bounds will be derived by first bounding the finitely
 many approximation numbers $s_1(\l_{A,s}),\ldots, s_N(\l_{A,s})$, for some $N\in\N$, by explicitly computable quantities that we call \emph{computed approximation bounds}.
 The computations required to derive the computed approximation bounds are not onerous, the main task being the 
evaluation of numerical integrals defining certain $H^2$ norms
 (of the transfer operator images of a chosen orthonormal basis).

We shall approximate $\l_{A,s}$
by first projecting $H^2(D)$ onto the space of polynomials up to a given degree.
Let $\l_{A,s}:H^2(D)\to H^2(D)$ be a transfer operator,
where $D\subset \CC$ is an open disc of radius $\rho$ centred at $c$, and 
$\{m_k\}_{k=0}^\infty$ is the corresponding orthonormal basis of monomials, given by
\begin{equation}\label{monomial}
m_k(z)=\rho^{-k}(z-c)^k\,.
\end{equation}

\subsection{Approximation bounds}

\begin{defn}\label{alphadefn}
For $n\ge1$, define the $n^{th}$ \emph{approximation bound} $\alpha_n(s)$ to be
\begin{equation}\label{alphaexpression}
\alpha_n(s) = \left( \sum_{k=n-1}^\infty  \|\l_{A,s} (m_k)\|^2 \right)^{1/2} \,.
\end{equation}
\end{defn}


\begin{prop}\label{alphaapproximationbound}
For each $n\ge1$,
\begin{equation}\label{snalphanbound}
s_n(\l_{A,s}) \le \alpha_n(s)\,.
\end{equation}
\end{prop}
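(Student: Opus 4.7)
The plan is to exhibit an explicit finite-rank operator $K$ with $\text{rank}(K)\le n-1$ and $\|\l_{A,s}-K\|\le \alpha_n(s)$, from which the inequality $s_n(\l_{A,s})\le\alpha_n(s)$ follows directly from the definition of the approximation numbers.

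First I would introduce the orthogonal projection $P_{n-1}:H^2(D)\to H^2(D)$ onto the $(n-1)$-dimensional subspace spanned by $m_0,m_1,\ldots,m_{n-2}$, which in terms of the expansion coefficients is simply $P_{n-1}f=\sum_{k=0}^{n-2}\hat f(k)\, m_k$. Then I would set $K:=\l_{A,s}\circ P_{n-1}$, which has rank at most $n-1$ and is an admissible competitor in the infimum defining $s_n(\l_{A,s})$.

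Next I would estimate $\|\l_{A,s}-K\|$. For any $f=\sum_{k=0}^\infty \hat f(k)\,m_k\in H^2(D)$, by linearity and continuity of $\l_{A,s}$ one has
\[
(\l_{A,s}-K)f=\l_{A,s}(f-P_{n-1}f)=\sum_{k=n-1}^\infty \hat f(k)\,\l_{A,s}(m_k),
\]
so that by the triangle inequality and then Cauchy--Schwarz,
\[
\|(\l_{A,s}-K)f\|\le \sum_{k=n-1}^\infty |\hat f(k)|\,\|\l_{A,s}(m_k)\|\le \Bigl(\sum_{k=n-1}^\infty|\hat f(k)|^2\Bigr)^{1/2}\Bigl(\sum_{k=n-1}^\infty\|\l_{A,s}(m_k)\|^2\Bigr)^{1/2}.
\]
The first factor is bounded above by $\|f\|$ thanks to the Hardy space norm identity $\|f\|^2=\sum_k|\hat f(k)|^2$, while the second factor is precisely $\alpha_n(s)$ by Definition \ref{alphadefn}. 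Hence $\|\l_{A,s}-K\|\le \alpha_n(s)$, and the definition of $s_n(\l_{A,s})$ gives the claim.

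There is no real obstacle here: the argument is essentially the standard Hilbert-space fact that the tail of an orthogonal expansion controls a rank-deficit approximation, combined with the orthonormality of the monomials $m_k$. The only point worth verifying carefully is that $\alpha_n(s)$ is finite, which is ensured because $\l_{A,s}$ is compact (in fact trace class, as noted earlier in the paper), so the sequence $(\|\l_{A,s}(m_k)\|)_{k\ge 0}$ is square summable.
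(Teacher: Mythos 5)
Your proof is correct and takes essentially the same route as the paper: you construct the same rank-$(n-1)$ operator $\l_{A,s}\circ P_{n-1}$ (the paper calls it $\l_{A,s}\Pi_n$) and bound the operator-norm difference via the triangle inequality and Cauchy--Schwarz, using the Hardy-space identity $\|f\|^2=\sum_k|\hat f(k)|^2$. The closing remark on finiteness of $\alpha_n(s)$ is a sensible addition but not strictly needed for the stated inequality.
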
 
\begin{proof}
For $f\in H^2(D)$ we can write
$$
f = \sum_{k=0}^\infty \hat f(k)\, m_k
$$
where the sequence $(\hat f(k))_{k=0}^\infty$ is square summable.
Define the rank-$(n-1)$ projection $\Pi_n:H^2(D)\to H^2(D)$ by
$$
\Pi_n(f) = \sum_{k=0}^{n-2} \hat f(k)\, m_k \,,
$$
where in particular $\Pi_1\equiv 0$.

The transfer operator $\l_{A,s}$ is approximated by the rank-$(n-1)$ operators $$\l_{A,s}^{(n)} := \l_{A,s} \Pi_n \,,$$
and  $\| \l_{A,s}- \l_{A,s}^{(n)} \|$ can be estimated
using the Cauchy-Schwarz inequality  as follows:
\begin{multline*}
\| (\l_{A,s} - \l_{A,s}^{(n)})f \|
= \| \sum_{k=n-1}^\infty \hat f(k)\, \l_{A,s}(m_k) \| 
 \le \sum_{k=n-1}^\infty | \hat f(k)| \| \l_{A,s}(m_k) \| \cr
 \le \left( \sum_{k=n-1}^\infty   \| \l_{A,s}(m_k) \|^2  \right)^{1/2}  \left( \sum_{k=n-1}^\infty |\hat f(k)|^2\right)^{1/2}  \cr
 \le \left( \sum_{k=n-1}^\infty   \| \l_{A,s}(m_k) \|^2  \right)^{1/2} \|f\| \,,\cr
\end{multline*}
and therefore 
$\| \l_{A,s}- \l_{A,s}^{(n)} \| \le \left( \sum_{k=n-1}^\infty   \| \l_{A,s}(m_k) \|^2  \right)^{1/2} = \alpha_n(s)$.
Since $\l_{A,s}^{(n)}$ has rank $n-1$,
it follows that $s_n(\l_{A,s}) \le \alpha_n(s)$, as required.
\end{proof}

\subsection{Contraction ratios}\label{contractionratiossubsection}

Let $C_i:H^2(D)\to H^2(D)$ be the \emph{composition operator}
$$
C_i f = f\circ T_i\,.
$$

The estimate arising in the following lemma motivates our definition below (see Definition \ref{hardydefinition})
of the \emph{contraction ratio} associated to a disc $D$ and subset $A\subset \N$.

\begin{lemma}\label{H2contractionratio}
Let $D$ and $D'$ be concentric discs, with radii $\rho$ and $\rho'$ respectively.
If, for $i\in A$, the image $T_i(D)$ is contained in $D'$, then for all $k\ge0$,
\begin{equation}
\| C_i(m_k)\| \le \left( \frac{\rho'}{\rho}\right)^k\,.
\end{equation}
\end{lemma}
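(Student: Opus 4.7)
The plan is to compute $\|C_i(m_k)\|$ directly from the boundary-integral definition of the $H^2(D)$ norm, and then to convert the geometric hypothesis $T_i(D)\subset D'$ into a pointwise bound on $\partial D$ that can be plugged straight into the integrand.

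First, I would unfold the definitions. Since $m_k(w)=\rho^{-k}(w-c)^k$, we have
\[
C_i(m_k)(z) = m_k(T_i(z)) = \rho^{-k}\bigl(T_i(z)-c\bigr)^{k},
\]
which is holomorphic on $D$ (noting that $T_i(z)=1/(z+i)$ has no pole in $D$, else $T_i(D)\subset D'$ would fail). By the definition of the Hardy space inner product,
\[
\|C_i(m_k)\|^{2} = \int_{0}^{1}\bigl|C_i(m_k)(c+\rho e^{2\pi i t})\bigr|^{2}\,dt
= \rho^{-2k}\int_{0}^{1}\bigl|T_i(c+\rho e^{2\pi i t})-c\bigr|^{2k}\,dt.
\]

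Next, I would exploit the containment hypothesis. Because $T_i$ is continuous on the closed disc $\overline D$ and $T_i(D)\subset D'$, continuity (equivalently, passage to closures) yields $T_i(\overline D)\subset \overline{D'}$; since $D'$ is centred at the same point $c$ with radius $\rho'$, this means $|T_i(z)-c|\le \rho'$ for every $z\in\partial D$. Substituting this pointwise bound into the integrand gives
\[
\|C_i(m_k)\|^{2} \le \rho^{-2k}\int_{0}^{1}(\rho')^{2k}\,dt = \left(\frac{\rho'}{\rho}\right)^{2k},
\]
and taking square roots delivers the claimed inequality.

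I do not anticipate a genuine obstacle here: the argument is a one-line application of the maximum-modulus-type bound coming from $T_i(D)\subset D'$, combined with the fact that our chosen basis $\{m_k\}$ is precisely the one for which the $H^2(D)$ norm is the $L^2$ norm on $\partial D$. The only point requiring a word of justification is that $T_i$ is continuous up to $\partial D$, which is automatic since $-i\notin \overline D$ (a pole of $T_i$ inside $\overline D$ would contradict $T_i(D)\subset D'$).
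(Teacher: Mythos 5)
Your proof is correct and follows essentially the same approach as the paper: both derive the pointwise bound $|T_i(z)-c|\le\rho'$ from the containment $T_i(D)\subset D'$ and plug it into the $H^2(D)$ norm of $C_i(m_k)=\rho^{-k}(T_i(\cdot)-c)^k$. The paper bounds the modulus on the open disc $D$ and uses that the sup-norm dominates the $H^2$ norm, while you pass to the boundary circle via continuity and evaluate the boundary integral directly; this is only a cosmetic difference.
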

\begin{proof}
Let $c$ denote the common centre of the discs $D, D'$.
If $z\in D$ then
$$|C_i(m_k)(z)| = \rho^{-k} |T_i(z)-c|^k < \rho^{-k} (\rho')^k = (\rho' /\rho)^k\,,$$
 so $\|C_i(m_k)\|\le  (\rho'/\rho)^k$, as required.
\end{proof}

For each $i\in A$, $s\in\R$, if the open disc $D$ is such that $-i\notin D$ then
define the \emph{weight function} $w_{i,s}:D\to\mathbb{C}$ by
$$
w_{i,s}(z) = \left(\frac{1}{z+i}\right)^{2s}\,, 
$$
and the \emph{multiplication operator} $W_{i,s}:H^2(D)\to H^2(D)$ by
$$
W_{i,s} f = w_{i,s} f\,.
$$
We may write
$$
\l_{A,s}  = \sum_{i\in A} W_{i,s} C_i\,,
$$
so that
$$
\| \l_{A,s}(m_k) \| \le 
\sum_{i\in A} \| W_{i,s} C_i(m_k) \|
\le
\sum_{i\in A} \| w_{i,s}\|_\infty  \|C_i(m_k) \|
 \,,
$$
and if $\rho_i'$ is such that $T_i(D)$ is contained in the concentric disc $D_i'$ of radius $\rho_i'$ then
Lemma \ref{H2contractionratio} implies that
\begin{equation}\label{rhoprimei}
\| \l_{A,s}(m_k) \| \le \sum_{i\in A} \| w_{i,s}\|_\infty (\rho_i'/\rho)^k\,.
\end{equation}

For our purposes it will be more convenient to work with a slightly simpler (and less sharp) version of
(\ref{rhoprimei}). This prompts the following definition:

\begin{defn}\label{hardydefinition}
Let $A\subset\N$ be finite, and $D\subset \mathbb{C}$ an open disc of radius $\rho$ such that
$\cup_{i\in A} T_i(D)\subset D$. Let $D'$ be the smallest disc, concentric with $D$, such that
$\cup_{i\in A} T_i(D)\subset D'$, and let
$\rho'$ denote the radius of $D'$.
The corresponding \emph{contraction ratio} $h=h_{A,D}$ is defined to be
\begin{equation}\label{hardyratiodefn}
h = h_{A,D} = \frac{\rho'}{\rho}\,. 
\end{equation}
\end{defn}

\begin{lemma}\label{hardyfirst}
Let $A\subset \N$ be finite, and $D$ an admissible disc, with contraction ratio $h=h_{A,D}$.
For all $k\ge0$,
\begin{equation}\label{normdecay}
\| \l_{A,s}(m_k) \| \le  h^k  \sum_{i\in A} \| w_{i,s}\|_\infty \,.
\end{equation}
\end{lemma}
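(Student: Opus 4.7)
The proof is essentially an immediate combination of what has just been established, with a minor monotonicity observation. The plan is to start from the decomposition $\l_{A,s} = \sum_{i\in A} W_{i,s} C_i$ and the already-derived bound
$$
\| \l_{A,s}(m_k) \| \le \sum_{i\in A} \| w_{i,s}\|_\infty\, \|C_i(m_k)\|,
$$
then reduce each factor $\|C_i(m_k)\|$ to the single quantity $h^k$. This replaces the $i$-dependent radii $\rho_i'$ appearing in (\ref{rhoprimei}) by the uniform radius $\rho'$, giving a slightly less sharp but much more convenient bound.

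The key input is Lemma \ref{H2contractionratio}, which tells us that if $T_i(D)\subset D_i'$, where $D_i'$ is the concentric disc of radius $\rho_i'$, then $\|C_i(m_k)\| \le (\rho_i'/\rho)^k$. First I would note that, by the very definition of $D'$ in \defref{hardydefinition} as the smallest concentric disc containing $\bigcup_{i\in A} T_i(D)$, we certainly have $T_i(D)\subset D'$ for each individual $i\in A$; equivalently, the minimal concentric disc $D_i'$ containing $T_i(D)$ satisfies $\rho_i' \le \rho'$. Plugging this monotonicity into \lemref{H2contractionratio} gives
$$
\|C_i(m_k)\| \le (\rho_i'/\rho)^k \le (\rho'/\rho)^k = h^k
$$
uniformly in $i\in A$.

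Finally, substituting this uniform bound into the inequality $\|\l_{A,s}(m_k)\| \le \sum_{i\in A}\|w_{i,s}\|_\infty \|C_i(m_k)\|$ and pulling the common factor $h^k$ outside the sum yields precisely (\ref{normdecay}). No step here is really an obstacle: the ingredients $(\l_{A,s} = \sum W_{i,s}C_i)$, the sub-multiplicativity of the operator norm under multiplication by $w_{i,s}$ bounded by $\|w_{i,s}\|_\infty$, and \lemref{H2contractionratio} have all been done; the only conceptual move is the passage from $\rho_i'$ to the common $\rho'$, which is immediate from $T_i(D)\subset D'$. If anything deserves a word of caution, it is simply to point out that the admissibility hypothesis $\bigcup_{i\in A} T_i(D)\subset D$ (together with the concentricity of $D$ and $D'$) forces $\rho' \le \rho$, so that $h\in(0,1]$ and the bound (\ref{normdecay}) is genuinely geometric in $k$.
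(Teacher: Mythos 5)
Your proof is correct and follows exactly the same route as the paper's: the paper observes that $\rho' = \max_{i\in A}\rho_i'$ and cites (\ref{rhoprimei}), which is precisely the two-step reduction (bound each $\|C_i(m_k)\|$ by $(\rho_i'/\rho)^k$ via Lemma \ref{H2contractionratio}, then majorize $\rho_i'$ by $\rho'$) that you spell out. The only difference is that you write out the substitution explicitly, which the paper leaves implicit.
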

\begin{proof}
If $D'$ is as in Definition \ref{hardydefinition} then $\rho'=\max_{i\in A} \rho_i'$ 
in the notation of (\ref{rhoprimei}), and the result follows from (\ref{rhoprimei}).
\end{proof}

\begin{cor}\label{ks}
Let $A\subset \N$ be finite, and $D$ an admissible disc, with contraction ratio $h=h_{A,D}$.
For all $n\ge1$,
\begin{equation}\label{salphabound}
s_n(\l_{A,s}) \le \alpha_n(s) \le  K_s  h^n
\end{equation}
where
\begin{equation}\label{ksdefn}
K_s =
\frac{  \sum_{i\in A} \| w_{i,s}\|_\infty }{ h  \sqrt{1 - h^2}}  \,.
\end{equation}
\end{cor}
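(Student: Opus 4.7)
The plan is to combine the three results just established, so that the corollary reduces to a geometric series computation. For the first inequality $s_n(\l_{A,s}) \le \alpha_n(s)$, I would simply cite Proposition \ref{alphaapproximationbound}, which has already established this bound for every $n\ge1$.

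For the second inequality, I would start from the defining expression (\ref{alphaexpression}) for $\alpha_n(s)^2$ and substitute the pointwise bound from Lemma \ref{hardyfirst}: since $\|\l_{A,s}(m_k)\| \le h^k \sum_{i\in A} \|w_{i,s}\|_\infty$ for every $k\ge 0$, squaring and summing from $k=n-1$ gives
\begin{equation*}
\alpha_n(s)^2 \le \Bigl(\sum_{i\in A} \|w_{i,s}\|_\infty\Bigr)^2 \sum_{k=n-1}^\infty h^{2k}.
\end{equation*}
Since $h<1$ (this is implicit in Definition \ref{hardydefinition}, as $\cup_{i\in A} T_i(D)\subset D$ together with $D'$ being the \emph{smallest} concentric disc containing the union forces $\rho'<\rho$ provided $A$ is finite and the images are compactly contained; I would make this remark explicit to guarantee convergence), the geometric series evaluates to $h^{2(n-1)}/(1-h^2)$.

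Taking square roots then yields
\begin{equation*}
\alpha_n(s) \le \frac{h^{n-1}}{\sqrt{1-h^2}} \sum_{i\in A} \|w_{i,s}\|_\infty = \frac{h^n}{h\sqrt{1-h^2}} \sum_{i\in A} \|w_{i,s}\|_\infty = K_s h^n,
\end{equation*}
with $K_s$ as defined in (\ref{ksdefn}). This completes the chain $s_n(\l_{A,s}) \le \alpha_n(s) \le K_s h^n$.

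There is no real obstacle here: the corollary is essentially a bookkeeping consequence of the preceding lemmas, and the only subtlety worth flagging is the justification that $h<1$ so that the geometric series converges. The exponent shift from $k=n-1$ producing the extra factor $1/h$ inside $K_s$ is the one place to be careful not to drop a power.
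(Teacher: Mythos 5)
Your proposal is correct and matches the paper's own proof: both cite Proposition~\ref{alphaapproximationbound} for the first inequality, then apply Lemma~\ref{hardyfirst} term by term inside the defining sum for $\alpha_n(s)$, evaluate the geometric series $\sum_{k\ge n-1} h^{2k} = h^{2(n-1)}/(1-h^2)$, and take the square root to identify $K_s$. The only addition is your (correct and sensible) explicit remark that $h<1$, which the paper leaves implicit.
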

\begin{proof}
Now
$$
\alpha_n(s) = \left( \sum_{k=n-1}^\infty  \|\l_{s} (m_k)\|^2 \right)^{1/2} 
$$
from Definition \ref{alphadefn} and Proposition \ref{alphaapproximationbound},
so Lemma \ref{hardyfirst} gives
$$
\alpha_n(s) \le   \left( \sum_{k=n-1}^\infty h^{2k} \right)^{1/2} \sum_{i\in A} \| w_{i,s}\|_\infty
= \frac{ h^{n-1}}{\sqrt{1 - h^2}}  \sum_{i\in A} \| w_{i,s}\|_\infty \,,
$$
and the result follows.
\end{proof}

\subsection{Euler bounds}

We can now derive the \emph{Euler bound} on the $n^{th}$ Taylor coefficient of the determinant:

\begin{prop}\label{eulerprop}
Let $A\subset \N$ be finite, and $D$ an admissible disc, with contraction ratio $h=h_{A,D}$.
If the transfer operator $\l_{A,s}$ has determinant $\det(I-z\l_{A,s})=1+\sum_{n=1}^\infty \delta_n(s)z^n$,
 then for all $n\ge1$,
 \begin{equation}\label{eulerequation}
|\delta_n(s)| \le \frac{K_s^n h^{n(n+1)/2}}{ \prod_{i=1}^n (1 - h^i)}\,.
\end{equation}
\end{prop}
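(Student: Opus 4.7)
The plan is to combine the two ingredients already assembled: Lemma \ref{gohberglemma}, which controls $|\delta_n(s)|$ by sums of products of approximation numbers, and Corollary \ref{ks}, which provides the geometric decay $s_i(\l_{A,s}) \le K_s h^i$. Substituting the latter into the former should reduce the problem to a purely combinatorial identity for a symmetric function in the variable $h$.

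More precisely, I would first apply Lemma \ref{gohberglemma} to write
\begin{equation*}
|\delta_n(s)| \le \sum_{i_1 < \ldots < i_n} \prod_{j=1}^n s_{i_j}(\l_{A,s}),
\end{equation*}
and then use Corollary \ref{ks} to replace each factor $s_{i_j}(\l_{A,s})$ by $K_s h^{i_j}$, giving
\begin{equation*}
|\delta_n(s)| \le K_s^n \sum_{1 \le i_1 < i_2 < \ldots < i_n} h^{i_1 + i_2 + \cdots + i_n}.
\end{equation*}
The task is then to evaluate this last sum in closed form, identifying it as $h^{n(n+1)/2}/\prod_{i=1}^n (1-h^i)$, which is exactly the factor appearing on the right-hand side of (\ref{eulerequation}).

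To obtain that closed form I would make the substitution $j_k = i_k - k$, so that the constraint $1 \le i_1 < i_2 < \ldots < i_n$ becomes $0 \le j_1 \le j_2 \le \ldots \le j_n$, and the exponent becomes $i_1 + \cdots + i_n = n(n+1)/2 + j_1 + \cdots + j_n$. Pulling out the factor $h^{n(n+1)/2}$ reduces the remaining sum to the generating function for partitions with at most $n$ parts, which is the classical product $\prod_{i=1}^n (1-h^i)^{-1}$. Equivalently, one may recognise the entire sum as the coefficient of $z^n$ in Euler's product expansion $\prod_{i=1}^\infty (1 + z h^i) = \sum_{n \ge 0} \frac{z^n h^{n(n+1)/2}}{\prod_{i=1}^n (1-h^i)}$, which justifies the name ``Euler bound''.

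The only potentially delicate step is the combinatorial identification, but since this is the standard $q$-binomial/Euler identity, no real obstacle arises; the proof is essentially an assembly argument, with all the analytic content having been built up in Lemma \ref{gohberglemma} and Corollary \ref{ks}.
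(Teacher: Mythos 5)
Your proposal matches the paper's proof exactly: apply Lemma~\ref{gohberglemma}, substitute the bound $s_{i_j}(\l_{A,s}) \le K_s h^{i_j}$ from Corollary~\ref{ks}, and evaluate $\sum_{i_1<\ldots<i_n} h^{i_1+\cdots+i_n}$ via Euler's identity (equation~(\ref{eulerformula}) in the paper). Your substitution $j_k = i_k - k$ simply spells out the ``repeated geometric summation'' that the paper leaves as a citation to Euler.
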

\begin{proof}
By Lemma \ref{gohberglemma},
\begin{equation*}
|\delta_n(s)|
\leq
 \sum_{i_1 < \ldots < i_n}
\prod_{j=1}^n
s_{i_j}(\l_{A,s})
 \,,
\end{equation*}
so Corollary \ref{ks} gives
\begin{equation*}
|\delta_n(s)|
\leq
K_s^n
 \sum_{i_1 < \ldots < i_n} h^{i_1+\ldots+i_n} \,,
\end{equation*}
and the result follows by repeated geometric summation
(as first noted by Euler \cite[Ch.~16]{euler}).
\end{proof}

Henceforth we use the notation
\begin{equation}\label{eulerformula}
E_n(r) := \frac{r^{n(n+1)/2}}{\prod_{i=1}^n (1-r^i)} = \sum_{i_1< \ldots < i_n} r^{i_1+\ldots + i_n}\,,
\end{equation}
so that (\ref{eulerequation}) can be written as
\begin{equation}\label{eulerequation2}
|\delta_n(s)| \le K_s^n E_n(h)\,,
\end{equation}
and we define the righthand side of (\ref{eulerequation2}) 
(or equivalently of (\ref{eulerequation}))
to be the \emph{Euler bound} on the $n^{th}$ Taylor coefficient of the determinant.

\section{Computed approximation bounds}\label{cabsection}

For all $n\ge1$, the $n^{th}$ approximation bound 
$$\alpha_n(s) = \left( \sum_{k=n-1}^\infty  \|\l_{A,s} (m_k)\|^2 \right)^{1/2}$$
is, as noted in Proposition \ref{alphaapproximationbound}, an upper bound on the $n^{th}$ approximation number 
$s_n(\l_{A,s})$. 

Each $m_k$ is just a normalised monomial (\ref{monomial}), and the operator $\l_{A,s}$ is available in closed form,
so that 
$$\l_{A,s}(m_k)(z) = \sum_{i\in A} \frac{(T_i(z)-c)^k}{\rho^{k} (z+i)^{2s}}\,,$$
and we may use numerical integration 
to compute\footnote{Numerical integration capability
is available in computer packages such as Mathematica,
and these norms can be computed to arbitrary precision; although higher precision requires greater computing time,
these computations are relatively quick 
(e.g.~for the computations in \S \ref{E2section} these integrals were computed with 150 digit accuracy).}
 each Hardy norm $\|\l_{A,s} (m_k)\|$ as
\begin{equation}\label{hardynormexplicit}
\|\l_{A,s} (m_k)\|^2 = \int_0^1 \left| \sum_{i\in A}  \frac{(T_i(\gamma(t))-c)^k}{\rho^{k} (\gamma(t)+i)^{2s}}\right|^2 \, dt\,,
\end{equation}
where $\gamma(t)=c+\rho e^{2\pi it}$.

Evaluation of $\alpha_n(s)$ involves the tail sum
$\sum_{k=n-1}^\infty  \|\l_{A,s} (m_k)\|^2$,
and in practice we can bound this by the sum of an exactly computed long finite sum
$\sum_{k=n-1}^N  \|\l_{A,s} (m_k)\|^2$, for some $N \gg n$,
and a rigorous upper bound on 
$\sum_{k=N+1}^\infty  \|\l_{A,s} (m_k)\|^2$
using (\ref{normdecay}).
More precisely, we have the following definition: 

\begin{defn}
Given $n,N\in\N$, with $n\le N$, define the
\emph{lower and upper computed approximation bounds}, $\alpha_{n,N,-}(s)$ and $\alpha_{n,N,+}(s)$, respectively, by
\begin{equation}\label{alphanN}
\alpha_{n,N,-}(s) = \left( \sum_{k=n-1}^N \| \l_{A,s}(m_k)\|^2 \right)^{1/2}\,,
\end{equation}
and
\begin{equation}\label{alphanNplus}
\alpha_{n,N,+}(s) 
=
\left( \alpha_{n,N,-}(s)^2
+ \left(\sum_{i\in A} \|w_{i,s}\|_\infty\right)^2 \frac{h^{2(N+1)}}{1-h^2}\right)^{1/2}
\,.
\end{equation}
\end{defn}

Evidently the lower computed approximation bound $\alpha_{n,N,-}(s)$ is a lower bound for $\alpha_n(s)$, 
in view of the positivity of the summands in (\ref{alphaexpression}) and (\ref{alphanN}), while
Lemma \ref{upperlowerbounds} below establishes that the upper computed approximation bound
$\alpha_{n,N,+}(s)$ is an upper bound for $\alpha_n(s)$.
Moreover, both $\alpha_{n,N,+}(s)$ and $\alpha_{n,N,-}(s)$ are readily computable: they are given by finite sums and,
as already noted, the summands $\| \l_{A,s}(m_k)\|^2$ are computable to arbitrary precision.

\begin{lemma}\label{upperlowerbounds}
Let $s\in\R$. For all $n,N\in \N$, with $n\le N$,
\begin{equation}\label{alphaupperlower}
\alpha_{n,N,-}(s) \le \alpha_n(s) \le \alpha_{n,N,+}(s)\,.
\end{equation}
\end{lemma}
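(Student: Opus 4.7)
The plan is to split the infinite series defining $\alpha_n(s)^2$ into a finite ``head'' running from $k=n-1$ to $k=N$ and an infinite ``tail'' from $k=N+1$ to infinity, then handle each piece in turn. All the relevant quantities are squared norms and hence non-negative, so the head/tail decomposition is an actual equality and the two bounds of the lemma correspond to dropping the tail entirely (giving the lower bound) or estimating it geometrically (giving the upper bound).

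For the lower inequality, I would simply write
\begin{equation*}
\alpha_n(s)^2 = \sum_{k=n-1}^{N} \|\l_{A,s}(m_k)\|^2 + \sum_{k=N+1}^{\infty} \|\l_{A,s}(m_k)\|^2 \ge \sum_{k=n-1}^{N} \|\l_{A,s}(m_k)\|^2 = \alpha_{n,N,-}(s)^2,
\end{equation*}
and take square roots (using that all quantities are non-negative). This uses nothing beyond non-negativity of the summands.

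For the upper inequality, the key input is Lemma~\ref{hardyfirst}, which gives $\|\l_{A,s}(m_k)\| \le h^k \sum_{i\in A}\|w_{i,s}\|_\infty$ for every $k\ge 0$. Squaring and summing a geometric series yields
\begin{equation*}
\sum_{k=N+1}^{\infty} \|\l_{A,s}(m_k)\|^2 \le \left(\sum_{i\in A}\|w_{i,s}\|_\infty\right)^{\!2} \sum_{k=N+1}^\infty h^{2k} = \left(\sum_{i\in A}\|w_{i,s}\|_\infty\right)^{\!2} \frac{h^{2(N+1)}}{1-h^2}.
\end{equation*}
Adding this to $\alpha_{n,N,-}(s)^2$ recovers exactly the expression defining $\alpha_{n,N,+}(s)^2$, so $\alpha_n(s)^2 \le \alpha_{n,N,+}(s)^2$, and taking square roots completes the proof.

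There is no real obstacle here; the proof is essentially a bookkeeping exercise. The only thing to be slightly careful about is that Lemma~\ref{hardyfirst} needs $D$ to be an admissible disc with contraction ratio $h$, which is implicit in the setting of the section, and that the geometric series $\sum h^{2k}$ converges, which follows from $h<1$ (a consequence of $\cup_{i\in A}T_i(D)\subset D$ forcing $\rho'<\rho$, together with the assumption that $D'$ is the smallest such concentric disc containing the images, which is standard in this setup).
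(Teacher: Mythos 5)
Your proof is correct and takes exactly the same approach as the paper: split $\alpha_n(s)^2$ into the head $\sum_{k=n-1}^N$ and tail $\sum_{k=N+1}^\infty$, drop the non-negative tail for the lower bound, and use the geometric estimate from Lemma~\ref{hardyfirst} (i.e.~(\ref{normdecay})) to bound the tail for the upper bound. Nothing differs beyond level of detail.
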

\begin{proof}
The inequality $\alpha_{n,N,-}(s) \le \alpha_n(s)$ is 
immediate from the definitions.
To prove that $\alpha_n(s) \le \alpha_{n,N,+}(s)$ note that
$$\alpha_n(s)^2 = \sum_{k=n-1}^N \|\l_{A,s}(m_k)\|^2 +  \sum_{k=N+1}^\infty \|\l_{A,s}(m_k)\|^2\,,$$
which together with (\ref{normdecay}) gives
$$\alpha_n(s)^2 \le  \sum_{k=n-1}^N \|\l_{A,s}(m_k)\|^2 + \left( \sum_{i\in A} \| w_{i,s}\|_\infty \right)^2 \frac{h^{2(N+1)}}{1-h^2}\,,
$$
and the result follows.
\end{proof}

\begin{remark}
The upper bound $\alpha_{n,N,+}(s)$ will be used in the sequel, as a tool
in providing rigorous estimates on Hausdorff dimension.
In practice $N$ will be chosen so that the values
$\alpha_{n,N,-}(s)$ and $\alpha_{n,N,+}(s)$ are close enough together that 
the inequality 
(\ref{alphaupperlower})
determines $\alpha_n(s)$
with precision far higher than that of the desired Hausdorff dimension estimate;
in particular, $N$ will be such that the difference 
$\alpha_{n,N,+}(s)-\alpha_{n,N,-}(s) = O(h^N)$ is
extremely small relative to the size of $\alpha_{n}(s)$.
\end{remark}

Combining (\ref{salphabound}) with (\ref{alphaupperlower}) immediately gives the exponential bound
\begin{equation}
\label{salphaboundN}
\alpha_{n,N,-}(s) \le K_s  h^n\quad\text{for all }n \le N\,,
\end{equation}
though the analogous bound for $\alpha_{n,N,+}(s)$ (which will be more useful to us in the sequel)
requires some extra care:

\begin{lemma}\label{nNboundlemma}
Let $s\in\R$. For all $n,N\in \N$, with $n\le N$,
\begin{equation}\label{nNbound}
\alpha_{n,N,+}(s) \le K_s(1 + h^{2(N+2-n)})^{1/2} h^n\,.
\end{equation}
\end{lemma}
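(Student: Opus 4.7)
The plan is to combine the definition of $\alpha_{n,N,+}(s)$ with the exponential bound on $\alpha_n(s)$ already established in Corollary~\ref{ks}, and then recognise that the ``tail correction'' term in $\alpha_{n,N,+}(s)^2$ reorganises cleanly as a power of $h$ multiplied by $K_s^2$. The main observation I will exploit is that the constant appearing in the correction term is already encoded in $K_s$: from the definition (\ref{ksdefn}), squaring gives
\begin{equation*}
K_s^2 = \frac{\bigl(\sum_{i\in A}\|w_{i,s}\|_\infty\bigr)^2}{h^2\,(1-h^2)},
\end{equation*}
so that
\begin{equation*}
\Bigl(\sum_{i\in A}\|w_{i,s}\|_\infty\Bigr)^2 \frac{h^{2(N+1)}}{1-h^2} \;=\; K_s^2\, h^{2(N+2)}.
\end{equation*}

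With this identity in hand, the argument has two short steps. First, since the lower computed approximation bound satisfies $\alpha_{n,N,-}(s) \le \alpha_n(s)$ by Lemma~\ref{upperlowerbounds}, and Corollary~\ref{ks} gives $\alpha_n(s) \le K_s h^n$, I obtain
\begin{equation*}
\alpha_{n,N,-}(s)^2 \;\le\; K_s^2\, h^{2n}.
\end{equation*}
Second, substituting into (\ref{alphanNplus}) and using the identity above,
\begin{equation*}
\alpha_{n,N,+}(s)^2 \;\le\; K_s^2\, h^{2n} + K_s^2\, h^{2(N+2)} \;=\; K_s^2\, h^{2n}\bigl(1 + h^{2(N+2-n)}\bigr).
\end{equation*}
Taking square roots yields (\ref{nNbound}).

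There is no real obstacle: the whole content is the algebraic observation that the explicit constant appearing in the correction term of the definition of $\alpha_{n,N,+}(s)$ is exactly $K_s^2 h^2$ times the geometric factor $h^{2(N+1)}$, which makes it homogeneous with the main bound and allows factoring out $K_s^2 h^{2n}$. The only thing to be slightly careful about is not attempting to bound $\alpha_{n,N,-}(s)$ via the direct sum $\sum_{k=n-1}^N h^{2k}$, since that route yields a sharper but differently-shaped bound; the form required in the statement is the one obtained by first passing through the global bound $\alpha_n(s) \le K_s h^n$.
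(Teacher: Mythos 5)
Your proof is correct and follows essentially the same route as the paper: the paper likewise combines the bound $\alpha_{n,N,-}(s)\le K_s h^n$ (which it records as inequality (\ref{salphaboundN}), itself obtained by chaining (\ref{alphaupperlower}) with Corollary~\ref{ks}) with the definition (\ref{alphanNplus}), and then uses the same algebraic identity $\bigl(\sum_{i\in A}\|w_{i,s}\|_\infty\bigr)^2/(1-h^2)=K_s^2h^2$ to rewrite the tail term as $K_s^2 h^{2(N+2)}$ before factoring out $K_s^2 h^{2n}$. There is no material difference between your argument and the paper's.
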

\begin{proof}
Combining (\ref{salphaboundN}) with (\ref{alphanNplus})
gives 
$$
\alpha_{n,N,+}(s) 
\le
\left( (K_sh^n)^2
+ \left(\sum_{i\in A} \|w_{i,s}\|_\infty\right)^2 \frac{h^{2(N+1)}}{1-h^2}\right)^{1/2}\,,
$$
but (\ref{ksdefn}) gives
$$ \frac{ \left(\sum_{i\in A} \|w_{i,s}\|_\infty\right)^2}{1-h^2} = K_s^2h^2\,,$$
so
$$
\alpha_{n,N,+}(s) 
\le
\left( (K_sh^n)^2
+ K_s^2 h^{2(N+2)} \right)^{1/2} \,,
$$
and the result follows.
\end{proof}

The utility of (\ref{nNbound}) stems from the fact that in practice $N-n$ will be large, and that 
 for sufficiently small values of $n$ the following
more direct analogue of (\ref{salphaboundN}) can be used:

\begin{cor}\label{jqnscor}
Let $s\in\R$. 
Suppose $N,Q\in\N$, with $Q \le N$.
If 
\begin{equation}\label{jqns}
J = J_{Q,N,s} := K_s\left( 1+h^{2(N+2-Q)}\right)^{1/2}
\end{equation}
then
\begin{equation}\label{jqnshbound}
\alpha_{n,N,+}(s) \le J h^n\quad\text{for all }1\le n\le Q\,.
\end{equation}
\end{cor}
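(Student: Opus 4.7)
The plan is to derive the corollary as an almost immediate consequence of Lemma \ref{nNboundlemma}, which already gives the $n$-dependent bound
$\alpha_{n,N,+}(s) \le K_s(1 + h^{2(N+2-n)})^{1/2}\, h^n$
for every $n \le N$. The only content of Corollary \ref{jqnscor} beyond this is to replace the $n$-dependent factor $(1+h^{2(N+2-n)})^{1/2}$ by a single uniform constant valid throughout the range $1 \le n \le Q$.

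First I would invoke the fact that $h = h_{A,D} \in (0,1)$: this holds because $D$ is admissible, meaning $\cup_{i\in A} T_i(D)$ is contained in a strictly smaller concentric disc $D'$, so $\rho' < \rho$. Consequently the map $k \mapsto h^{2k}$ is strictly decreasing in $k$. For $1 \le n \le Q$ we have $N+2-n \ge N+2-Q$, and therefore $h^{2(N+2-n)} \le h^{2(N+2-Q)}$, whence $(1 + h^{2(N+2-n)})^{1/2} \le (1 + h^{2(N+2-Q)})^{1/2}$.

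Combining this monotonicity with Lemma \ref{nNboundlemma} gives
$\alpha_{n,N,+}(s) \le K_s(1+h^{2(N+2-n)})^{1/2} h^n \le K_s(1+h^{2(N+2-Q)})^{1/2} h^n = J\, h^n$
for all $1 \le n \le Q$, which is precisely \eqref{jqnshbound}.

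There is no real obstacle here: the work has all been done in Lemma \ref{nNboundlemma}, and Corollary \ref{jqnscor} is simply the observation that uniformising over $n \in \{1,\ldots,Q\}$ costs nothing more than evaluating the square-root factor at its worst-case index $n = Q$. The point of stating it as a separate corollary is presumably practical: the constant $J$ will be used later as a single explicit bound on $\alpha_{n,N,+}(s)/h^n$ throughout the computationally accessible range, and for large $N - Q$ the factor $(1+h^{2(N+2-Q)})^{1/2}$ is extremely close to $1$, making $J$ barely larger than $K_s$.
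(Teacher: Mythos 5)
Your proof is correct and takes the same route as the paper, which simply states that the corollary is immediate from Lemma~\ref{nNboundlemma}; you have spelled out the one-line monotonicity observation ($h\in(0,1)$ and $N+2-n\ge N+2-Q$ for $n\le Q$) that makes this immediate.
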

\begin{proof}
Immediate from Lemma \ref{nNboundlemma}.
\end{proof}

\begin{remark}
In practice $Q$ will be of some modest size, dictated by the computational resources at our disposal;
specifically, it will be chosen slightly larger than the largest $P\in \N$ for which it is feasible to compute all periodic points of period
$\le P$ (e.g.~in \S \ref{E2section}, when estimating the dimension of the set $E_2=E_{\{1,2\}}$,
we explicitly compute all periodic points up to period $P=25$, and in the proof of Theorem \ref{e2theorem} we choose $Q=28$).
The value $N$ will be chosen to be significantly larger than $Q$ (e.g.~in 
the proof of Theorem \ref{e2theorem}  we choose $N=600$).
Since $N+2-Q$ is large, $h^{N+2-Q}$ will be extremely small, and 
 $J=J_{Q,N,s}$ will be extremely close to $K_s$; ideally this closeness ensures that
 the two constants $J_{Q,N,s}$ and $K_s$ 
 are indistinguishable to the chosen level of working precision
 (e.g.~in the proof of Theorem \ref{e2theorem}, $N+2-Q=574$ and $h\approx 0.511284$,
 so $h^{N+2-Q}\approx 5.9 \times 10^{-168}$, whereas computations are performed to 150 decimal digit precision).
\end{remark}

\section{Computed Taylor bounds}\label{taylorsection}

In order to use the computed approximation bounds to provide a rigorous upper bound on the 
Taylor coefficients of the determinant $\det(I-z\l_{A,s})$,
we now fix a further natural number $M$, satisfying $M\le N$.
For any such $M$, it is convenient to define
the sequence $(\alpha_{n,N,+}^{M}(s) )_{n=1}^\infty$
to be the one whose $n^{th}$ term equals $\alpha_{n,N,+}(s)$ until $n=M$,
and whose subsequent terms are given
by the exponential upper bound on $s_n(\l_{A,s})$ and $\alpha_n(s)$ (cf.~(\ref{salphabound})):
\begin{equation}\label{alph}
\alpha_{n,N,+}^{M}(s) :=
\begin{cases}
\alpha_{n,N,+}(s) & \text{for }1\le n\le M\,, \cr
K_s h^n & \text{for }n>M\,.
\end{cases}
\end{equation}

This allows us to make the following definition:

\begin{defn}
Let $s\in\R$. For $n,M,N\in\N$ with $n\le M\le N$,
the \emph{Taylor bound} $\beta_{n,N,+}^{M}(s)$ is defined by
\begin{equation}\label{firstbeta}
\beta_{n,N,+}^{M}(s) := \sum_{i_1<\ldots< i_n} \prod_{j=1}^n \alpha_{i_j,N,+}^{M}(s) \,,
\end{equation}
where the sum is over those $\i=(i_1,\ldots,i_n)\in\N^n$
which satisfy $i_1< i_2 < \ldots< i_n$.
\end{defn}

As the name suggests, the Taylor bound $\beta_{n,N,+}^{M}(s)$ bounds the $n^{th}$
Taylor coefficient of the determinant $\det(I-z\l_{A,s})=1+\sum_{n=1}^\infty \delta_n(s)z^n$:

\begin{lemma}\label{betacbound}
Let $s\in\R$.
For $n, M, N\in \N$ with $n\le M\le N$,
\begin{equation}\label{cnalphatilde}
|\delta_n(s)|
\leq
 \beta_{n,N,+}^{M}(s)
 \,.
\end{equation}
\end{lemma}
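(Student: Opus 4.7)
The plan is to chain together three bounds that are already in hand: the Gohberg-style bound on Taylor coefficients in terms of approximation numbers (Lemma \ref{gohberglemma}), the bound of approximation numbers by approximation bounds (Proposition \ref{alphaapproximationbound}), and the two regimes of upper bound on $\alpha_n(s)$ given in Lemma \ref{upperlowerbounds} and Corollary \ref{ks}. The key observation is that the piecewise-defined sequence $\alpha_{n,N,+}^{M}(s)$ is, by construction, an upper bound for $s_n(\mathcal{L}_{A,s})$ for \emph{every} $n\ge1$, and so the sum over strictly increasing multi-indices is monotone under termwise domination.

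First I would verify the pointwise domination $s_n(\mathcal{L}_{A,s}) \le \alpha_{n,N,+}^{M}(s)$ for all $n\ge1$, by splitting into the two cases dictated by (\ref{alph}). For $1\le n\le M$, Proposition \ref{alphaapproximationbound} gives $s_n(\mathcal{L}_{A,s}) \le \alpha_n(s)$, and then Lemma \ref{upperlowerbounds} gives $\alpha_n(s) \le \alpha_{n,N,+}(s) = \alpha_{n,N,+}^{M}(s)$ (this step uses the hypothesis $M\le N$). For $n>M$, Corollary \ref{ks} gives $s_n(\mathcal{L}_{A,s}) \le K_s h^n = \alpha_{n,N,+}^{M}(s)$. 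So in either regime the piecewise definition is engineered precisely to dominate $s_n(\mathcal{L}_{A,s})$.

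Second, I would apply Lemma \ref{gohberglemma} to write
\[
|\delta_n(s)| \le \sum_{i_1<\ldots<i_n} \prod_{j=1}^n s_{i_j}(\mathcal{L}_{A,s}),
\]
and then, using that all factors are nonnegative, dominate each $s_{i_j}(\mathcal{L}_{A,s})$ by $\alpha_{i_j,N,+}^{M}(s)$ via the first step. This yields
\[
|\delta_n(s)| \le \sum_{i_1<\ldots<i_n} \prod_{j=1}^n \alpha_{i_j,N,+}^{M}(s) = \beta_{n,N,+}^{M}(s),
\]
which is exactly (\ref{cnalphatilde}).

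There is no real obstacle here: this lemma is essentially bookkeeping that assembles the pieces developed earlier. The only subtlety worth flagging in the write-up is to confirm that the condition $M\le N$ legitimizes the use of Lemma \ref{upperlowerbounds} on the range $1\le n\le M$, and that the tail definition $\alpha_{n,N,+}^{M}(s)=K_s h^n$ for $n>M$ is the correct upper bound to invoke from Corollary \ref{ks} rather than the (tighter but computationally inaccessible) $\alpha_n(s)$.
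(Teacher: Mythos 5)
Your proof is correct and follows essentially the same route as the paper: chain Lemma \ref{gohberglemma} with the termwise domination $s_n(\l_{A,s}) \le \alpha_{n,N,+}^{M}(s)$, established via Proposition \ref{alphaapproximationbound} and Lemma \ref{upperlowerbounds} in the regime $n\le M$ and via Corollary \ref{ks} in the regime $n>M$. If anything you are slightly more explicit than the paper in spelling out the $n>M$ case, which the paper leaves implicit.
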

\begin{proof}
Combining
(\ref{salphabound}), (\ref{alphaupperlower}) and (\ref{alph})
gives
\begin{equation*}\label{snalphanNplus}
s_n(\l_{A,s}) \le \alpha_{n,N,+}^{M}(s)\quad\text{for all }1\le n\le M \le N\,,
\end{equation*}
and combining this with Lemma \ref{gohberglemma} gives (\ref{cnalphatilde}).
\end{proof}

Note that $ \beta_{n,N,+}^{M}(s)$ is precisely the $n^{th}$ power series coefficient
for the infinite product $\prod_{i=1}^\infty (1+ \alpha_{i,N,+}^{M}(s) z)$, and
that the sum in (\ref{firstbeta}) is an infinite one; thus we will seek a computationally accessible approximation to 
$ \beta_{n,N,+}^{M}(s)$.
We expect that $ \beta_{n,N,+}^{M}(s)$ is well approximated by the $n^{th}$ power series coefficient
for the \emph{finite} product $\prod_{i=1}^M (1+ \alpha_{i,N,+}^{M}(s) z) = \prod_{i=1}^M (1+ \alpha_{i,N,+}(s) z)$,
namely the value $\beta_{n,N,+}^{M,-}(s)$ 
defined as follows:

\begin{defn}
Let $s\in\R$. For $n,M,N\in\N$ with $n\le M\le N$,
the \emph{lower computed Taylor bound} $\beta_{n,N,+}^{M,-}(s)$ is defined as
\begin{equation}\label{betanNplusMminusalternative}
\beta_{n,N,+}^{M,-}(s)
:=
  \sum_{i_1<\ldots< i_n\le M} \prod_{j=1}^n \alpha_{i_j,N,+}(s)\,.
 \end{equation}
\end{defn}

\begin{remark}
\item[\, (i)]
The fact that $\beta_{n,N,+}^{M,-}(s)$ is defined in terms of upper computed approximation bounds 
$\alpha_{i_j,N,+}(s)$,
together with the finiteness of the sum (and product) in (\ref{betanNplusMminusalternative}), 
ensures that $\beta_{n,N,+}^{M,-}(s)$ can be computed (to arbitrary precision).
\item[\, (ii)]
Clearly, an equivalent definition of 
$\beta_{n,N,+}^{M,-}(s)$ is
\begin{equation}\label{betanNplusMminus}
\beta_{n,N,+}^{M,-}(s)
=
 \sum_{i_1<\ldots< i_n \le M} \prod_{j=1}^n \alpha_{i_j,N,+}^{M}(s)\,.
\end{equation}
\end{remark}


The lower computed Taylor bound
$\beta_{n,N,+}^{M,-}(s)$ is obviously smaller than
the Taylor bound
 $\beta_{n,N,+}^{M}(s)$, though in view of (\ref{cnalphatilde})
 we require an \emph{upper computed Taylor bound} (introduced in Definition \ref{uppercomputedtaylorbound} below)
 that is larger than $\beta_{n,N,+}^{M}(s)$.
The following result estimates
the difference 
$\beta_{n,N,+}^{M}(s) - \beta_{n,N,+}^{M,-}(s)$, and subsequently
(see Definition \ref{uppercomputedtaylorbound})
 provides the inspiration for the definition of the
upper computed Taylor bound:

\begin{lemma}\label{betadifference}
Let $s\in\R$.
Given $Q,M,N\in\N$ with $Q\le M\le N$,
 and $J=J_{Q,N,s}$ defined by (\ref{jqns}),
\begin{equation}\label{betadifferencerequiredbound}
\beta_{n,N,+}^{M}(s) - \beta_{n,N,+}^{M,-}(s)
\le \sum_{l=0}^{n-1} J^{n-l} \beta_{l,N,+}^{M,-}(s)\, h^{M(n-l)} E_{n-l}(h)\quad\text{for all }1\le n\le Q \,.
\end{equation}
\end{lemma}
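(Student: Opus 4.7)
The plan is to expand both sides via the definitions (\ref{firstbeta}) and (\ref{betanNplusMminus}), and to split the difference $\beta_{n,N,+}^{M}(s) - \beta_{n,N,+}^{M,-}(s)$ according to how many indices of each strictly increasing tuple $(i_1,\ldots,i_n)$ lie below the cutoff $M$. First I would note that, since (\ref{betanNplusMminus}) restricts the sum defining $\beta_{n,N,+}^{M}(s)$ to tuples with $i_n \le M$,
$$\beta_{n,N,+}^{M}(s) - \beta_{n,N,+}^{M,-}(s) = \sum_{\substack{i_1 < \ldots < i_n \\ i_n > M}} \prod_{j=1}^n \alpha^{M}_{i_j,N,+}(s).$$
Stratifying this sum by letting $l \in \{0,1,\ldots,n-1\}$ denote the number of indices with $i_j \le M$, every surviving tuple takes the form $i_1 < \ldots < i_l \le M < i_{l+1} < \ldots < i_n$, and the product factorises as a ``low part'' (indices $\le M$) times a ``high part'' (indices $> M$).

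Next I would evaluate these two parts using the piecewise definition (\ref{alph}). The low factors equal $\alpha_{i_j,N,+}(s)$, and summing the low part over $i_1 < \ldots < i_l \le M$ reproduces $\beta_{l,N,+}^{M,-}(s)$ via (\ref{betanNplusMminusalternative}), with the convention that the empty product gives $\beta_{0,N,+}^{M,-}(s) = 1$ at the endpoint $l = 0$. The high factors equal $K_s h^{i_j}$, and the bijective translation $j_k := i_{l+k} - M$ sends the sum over $M < i_{l+1} < \ldots < i_n$ to a sum over $1 \le j_1 < \ldots < j_{n-l}$, yielding
$$\sum_{M < i_{l+1} < \ldots < i_n} \prod_{j=l+1}^n K_s h^{i_j} \;=\; K_s^{n-l}\, h^{M(n-l)} \sum_{j_1 < \ldots < j_{n-l}} h^{j_1+\ldots+j_{n-l}} \;=\; K_s^{n-l}\, h^{M(n-l)}\, E_{n-l}(h)$$
by the Euler identity (\ref{eulerformula}).

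Combining these two factorisations yields the exact identity
$$\beta_{n,N,+}^{M}(s) - \beta_{n,N,+}^{M,-}(s) \;=\; \sum_{l=0}^{n-1} K_s^{n-l}\, \beta_{l,N,+}^{M,-}(s)\, h^{M(n-l)}\, E_{n-l}(h).$$
Finally, since $J = K_s(1 + h^{2(N+2-Q)})^{1/2} \ge K_s$ by (\ref{jqns}), replacing $K_s^{n-l}$ by $J^{n-l}$ in each summand yields the stated inequality (\ref{betadifferencerequiredbound}). No step is genuinely difficult; the only mild care needed is in the combinatorial bookkeeping of the stratification (particularly the $l = 0$ case, where the low sum is empty) and in checking that the index shift aligns correctly with the Euler formula (\ref{eulerformula}). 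The constraint $n \le Q$ is not explicitly invoked in my argument, but is of course compatible since $K_s \le J$ holds unconditionally.
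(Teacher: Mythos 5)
Your proof is correct and follows essentially the same route as the paper: stratify the index set $\I_n = \{\i : i_1 < \ldots < i_n\}$ according to how many indices fall at or below $M$, factorise each stratum into a ``low'' sum (reproducing $\beta_{l,N,+}^{M,-}(s)$) and a ``high'' sum (shifted to yield $h^{M(n-l)}E_{n-l}(h)$ via the Euler formula), and combine. Where you deviate slightly — and to your advantage — is in first recording the \emph{exact} identity
\begin{equation*}
\beta_{n,N,+}^{M}(s) - \beta_{n,N,+}^{M,-}(s) \;=\; \sum_{l=0}^{n-1} K_s^{n-l}\, \beta_{l,N,+}^{M,-}(s)\, h^{M(n-l)}\, E_{n-l}(h),
\end{equation*}
which follows because for $i > M$ the definition (\ref{alph}) gives $\alpha_{i,N,+}^{M}(s) = K_s h^i$ on the nose, and only then weakening $K_s$ to $J$. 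The paper instead inserts the bound $\alpha_{i,N,+}^{M}(s) \le J h^i$ directly inside the sum, appealing to Corollary \ref{jqnscor}; strictly speaking that corollary concerns indices $i \le Q$, whereas the indices being bounded here all exceed $M \ge Q$, so the cleaner justification is the one you give (the definitional equality $\alpha_{i,N,+}^M(s)=K_s h^i$ for $i>M$ together with $K_s \le J$). Your approach also makes transparent what you correctly observe at the end: the hypothesis $n \le Q$ plays no role in the argument — it is carried in the statement only because $J = J_{Q,N,s}$ is fixed by the intended application in Definition \ref{uppercomputedtaylorbound} and Proposition \ref{taylorboundprop}.
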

\begin{proof}
Let $n$ be such that $1\le n\le Q$.
The set
$
\I_n := \{ \i = (i_1,\ldots,i_n) \in \N^n: i_1 <\ldots < i_n\} 
$
can be partitioned as 
$\I_n = \bigcup_{l=0}^n \I_n^{(l)}$,
where the $\I_n^{(l)}$ are defined by
\begin{equation*}
\I_n^{(l)} =
\begin{cases}
\{ \i = (i_1,\ldots,i_n)\in\I_n:  M < i_{1}\} & \text{if }l=0 \,, \cr
\{ \i = (i_1,\ldots,i_n)\in\I_n: i_l \le M < i_{l+1}\} & \text{if } 1\le l \le n-1\,, \cr
\{ \i = (i_1,\ldots,i_n)\in\I_n: i_n \le  M \} & \text{if }l=n\,.
\end{cases}
\end{equation*}
Define
$$
\beta_{n,N,+}^{M, (l)}(s) 
:= \sum_{\i \in \I_n^{(l)}} 
\prod_{j=1}^n \alpha_{i_j,N,+}^{M}(s) 
\quad\text{for each }0\le l\le n\,,
$$
so that in particular
\begin{equation}\label{inparticularnminus}
\beta_{n,N,+}^{M, (n)}(s) = \beta_{n,N,+}^{M, -}(s) \,.
\end{equation}

With this notation, and since $\I_n = \bigcup_{l=0}^n \I_n^{(l)}$,
 we can express $\beta_{n,N,+}^{M}(s)$ as
\begin{equation}\label{canexpress}
\beta_{n,N,+}^{M}(s) 
= \sum_{\i \in \I_n} \prod_{j=1}^n \alpha_{i_j,N,+}^{M}(s)   
= \sum_{l=0}^{n} \beta_{n,N,+}^{M, (l)}(s)  \,.
\end{equation}
Combining (\ref{inparticularnminus}) and (\ref{canexpress}) gives
\begin{equation}\label{epsilonsum}
\beta_{n,N,+}^{M}(s) - \beta_{n,N,+}^{M,-}(s)
= \sum_{l=0}^{n-1} \beta_{n,N,+}^{M, (l)}(s) \,.
\end{equation}

In order to bound each $\beta_{n,N,+}^{M, (l)}(s)$ in (\ref{epsilonsum}) we
use the fact that $\alpha_{i,N,+}^{M}(s) \le J h^i$ for all $1\le i\le Q$ (see Corollary \ref{jqnscor}) to obtain
\begin{equation}\label{manipulation}
\beta_{n,N,+}^{M, (l)}(s) 
= \sum_{\i \in \I_n^{(l)}} \prod_{j=1}^n \alpha_{i_j,N,+}^{M}(s)
\le J^{n-l} \sum_{\i \in \I_n^{(l)}}  h^{i_{l+1} +\ldots + i_n} \prod_{j=1}^l \alpha_{i_j,N,+}^{M}(s) \,,
\end{equation}
and introducing $\j = (\iota_1,\ldots,\iota_{n-l})\in \I_{n-l}$ with $i_{l+k} = \iota_k +M$ for $1\le k\le n-l$, we can re-express the righthand side of (\ref{manipulation}) to obtain
\begin{equation*}
\beta_{n,N,+}^{M, (l)}(s)
\le J^{n-l} \left( \sum_{\i \in \I_l^{(l)}}  \prod_{j=1}^l \alpha_{i_j,N,+}^{M}(s) \right)
\left(  \sum_{\j\in \I_{n-l}} h^{(n-l)M} h^{\iota_1+\ldots+\iota_{n-l} }  \right) \,,
\end{equation*}
and therefore
\begin{equation}\label{betanlM}
\beta_{n,N,+}^{M, (l)}(s) 
\le J^{n-l} \beta_{l,N,+}^{M,-}(s)\,  h^{M(n-l)} \ E_{n-l}(h)\,.
\end{equation}

Now combining (\ref{epsilonsum}) and (\ref{betanlM}) gives the required bound (\ref{betadifferencerequiredbound}).
\end{proof}

\begin{remark}
In practice the $l=n-1$ term on the righthand side of
(\ref{betadifferencerequiredbound}) tends to be the dominant one, as $M$ is chosen large enough so that
$h^M$ is extremely small.
\end{remark}
 
\begin{defn}\label{uppercomputedtaylorbound} 
Let $s\in\R$. For $n, Q,M,N\in\N$ with $n\le Q\le M\le N$,
define the \emph{upper computed Taylor bound} $ \beta_{n,N,+}^{M,+}(s)$ by
$$
 \beta_{n,N,+}^{M,+}(s)
 :=
 \beta_{n,N,+}^{M,-}(s) +  \sum_{l=0}^{n-1} J_{Q,N,s}^{n-l}\, \beta_{l,N,+}^{M,-}(s)\, h^{M(n-l)} E_{n-l}(h) \,.
 $$
 \end{defn}
 
 From
 Lemma \ref{betadifference} it then follows that the upper computed Taylor bound
 $\beta_{n,N,+}^{M,+}(s)$
  is indeed larger than the Taylor bound
 $\beta_{n,N,+}^{M}(s)$:

 \begin{cor}\label{betadifferencecor}
Let $s\in\R$.
If $Q,M,N\in\N$ with $Q\le M\le N$,
then 
$$
\beta_{n,N,+}^{M}(s) \le \beta_{n,N,+}^{M,+}(s)\quad
\text{for all }1\le n\le Q \,.
$$
\end{cor}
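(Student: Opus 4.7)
The plan is to observe that this corollary is essentially a direct reformulation of Lemma \ref{betadifference}, obtained by transposing the $\beta_{n,N,+}^{M,-}(s)$ term to the other side of the inequality.

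More precisely, I would start by invoking Lemma \ref{betadifference}, which under the hypothesis $Q\le M\le N$ gives, for every $1\le n\le Q$,
\begin{equation*}
\beta_{n,N,+}^{M}(s) - \beta_{n,N,+}^{M,-}(s)
\le \sum_{l=0}^{n-1} J_{Q,N,s}^{n-l}\, \beta_{l,N,+}^{M,-}(s)\, h^{M(n-l)} E_{n-l}(h) \,.
\end{equation*}
Adding $\beta_{n,N,+}^{M,-}(s)$ to both sides then yields
\begin{equation*}
\beta_{n,N,+}^{M}(s) \le \beta_{n,N,+}^{M,-}(s) + \sum_{l=0}^{n-1} J_{Q,N,s}^{n-l}\, \beta_{l,N,+}^{M,-}(s)\, h^{M(n-l)} E_{n-l}(h)\,,
\end{equation*}
and the right-hand side is precisely the definition of $\beta_{n,N,+}^{M,+}(s)$ given in Definition \ref{uppercomputedtaylorbound}, completing the proof.

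Since all the substantive work has been carried out already in the proof of Lemma \ref{betadifference} (namely the partition of $\I_n$ according to where the multi-index transitions past $M$, and the use of Corollary \ref{jqnscor} to bound the $\alpha_{i_j,N,+}^{M}(s)$ factors for indices up to $Q$), there is no genuine obstacle here; the corollary is a purely bookkeeping consequence. The only point to verify is that the constant $J$ appearing in Lemma \ref{betadifference} coincides with $J_{Q,N,s}$ used in Definition \ref{uppercomputedtaylorbound}, which is true by construction.
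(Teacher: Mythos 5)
Your proposal is correct and follows exactly the same route as the paper, which simply cites Lemma \ref{betadifference} and Definition \ref{uppercomputedtaylorbound}; transposing the $\beta_{n,N,+}^{M,-}(s)$ term and matching against the definition is precisely the intended (and only) step.
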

\begin{proof}
Immediate from Lemma \ref{betadifference} and Definition \ref{uppercomputedtaylorbound}.
\end{proof}

 Finally, we deduce that the $n^{th}$ Taylor coefficient $\delta_n(s)$ of the determinant $\det(I-z\l_{A,s})$
 can be bounded in modulus by the upper computed Taylor bound
 $\beta_{n,N,+}^{M,+}(s)$ (a quantity we can compute to arbitrary precision):

\begin{prop}\label{taylorboundprop}
Let $s\in\R$.
If $Q,M,N\in\N$ with $Q\le M\le N$,
then 
$$
|\delta_n(s)| \le \beta_{n,N,+}^{M,+}(s)\quad\text{for all } 1\le n\le Q\,.
$$
\end{prop}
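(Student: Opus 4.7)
The plan is simply to concatenate the two bounds already established in Lemma~\ref{betacbound} and Corollary~\ref{betadifferencecor}, so the proof is essentially a one-line chaining of inequalities.

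First, the hypothesis $Q\le M\le N$ together with $1\le n\le Q$ gives $n\le M\le N$, which is precisely the range in which Lemma~\ref{betacbound} is applicable. Applying that lemma yields
\[
|\delta_n(s)| \le \beta_{n,N,+}^{M}(s).
\]
This is where the substantive work has already been done: Lemma~\ref{gohberglemma} bounds $|\delta_n(s)|$ by a symmetric sum of products of approximation numbers $s_{i_j}(\l_{A,s})$, while the estimate $s_n(\l_{A,s})\le \alpha_{n,N,+}^{M}(s)$ (combining Proposition~\ref{alphaapproximationbound}, Lemma~\ref{upperlowerbounds} and Corollary~\ref{ks}) converts that sum into $\beta_{n,N,+}^{M}(s)$.

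Second, since $1\le n\le Q$ and $Q\le M\le N$, Corollary~\ref{betadifferencecor} applies and gives
\[
\beta_{n,N,+}^{M}(s) \le \beta_{n,N,+}^{M,+}(s).
\]
Composing the two displayed inequalities yields the claim.

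There is no genuine obstacle at this stage: the heavy lifting was done in Lemma~\ref{betadifference}, where the gap between the (infinite, non-computable) quantity $\beta_{n,N,+}^{M}(s)$ and its computable truncation $\beta_{n,N,+}^{M,-}(s)$ was bounded explicitly in terms of the Euler expressions $E_{n-l}(h)$ and the constant $J_{Q,N,s}$. That estimate is exactly what motivated the definition of $\beta_{n,N,+}^{M,+}(s)$ in Definition~\ref{uppercomputedtaylorbound}, so the present proposition is essentially a bookkeeping statement that packages Lemma~\ref{betacbound} and Corollary~\ref{betadifferencecor} into the form that will be used in the rigorous Hausdorff dimension estimates of the subsequent sections.
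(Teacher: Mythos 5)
Your proposal is correct and matches the paper's proof exactly: both simply chain Lemma~\ref{betacbound} ($|\delta_n(s)|\le\beta_{n,N,+}^{M}(s)$) with Corollary~\ref{betadifferencecor} ($\beta_{n,N,+}^{M}(s)\le\beta_{n,N,+}^{M,+}(s)$). Your added remarks on where the real work lies (Lemma~\ref{betadifference}) are accurate commentary rather than a departure in method.
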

\begin{proof}

Lemma \ref{betacbound} gives
$ |\delta_n(s)| \le \beta_{n,N,+}^{M}(s)$, and
Corollary \ref{betadifferencecor} gives
$\beta_{n,N,+}^{M}(s) \le \beta_{n,N,+}^{M,+}(s)$,
so the result follows.
\end{proof}

\begin{remark}
In \S \ref{E2section}, 
for the computations in the proof of Theorem \ref{e2theorem},
we choose $N=600$, $M=400$, and $Q=28$, using
Proposition \ref{taylorboundprop} to obtain the upper bound on $|\delta_n(s)|$ for $P+1=26\le n\le 28$,
having explicitly evaluated $\delta_n(s)$ for $1\le n\le 25$ using periodic points of period up to $P= 25$.
\end{remark}

\section{The Hausdorff dimension of $E_2$}\label{E2section}

Here we consider the set $E_2$, corresponding to the choice $A=\{1,2\}$.
We shall suppress the set $A$ from our notation, writing $\l_s$ instead of $\l_{A,s}$.

The approximation $s_N$ to $\text{dim}(E_2)$, based on periodic points of period up to $N$,
is the zero (in the interval $(0,1)$) of the function $\D_N$ defined by (\ref{DNdefn});
these approximations are tabulated in Table 1 for $18\le n\le 25$. 
We note that the 24th and 25th approximations to $\text{dim}(E_2)$ share
the first 129 decimal digits
\begin{align*}
0.&5312805062772051416244686473684717854930591090183987798883978039 \\
& 27529535643831345918109570181185239880428057243075187633422389339 \,
\end{align*}
though the rate of convergence gives confidence that the first 139 digits
\begin{align*}
0.&531280506277205141624468647368471785493059109018398779888397803927529 \\
& 5356438313459181095701811852398804280572430751876334223893394808223090 \,
\end{align*}
of $s_{25}$ are in fact correct digits of $\text{dim}(E_2)$.

It turns out that we can \emph{rigorously} 
justify around three quarters of these decimal digits,
proving that the first 100 digits are correct.
In fact we prove slightly more than that, 
by setting $s^-$ to be the value
\begin{align*}
s^- =
0.&531280506277205141624468647368471785493059109018398 \\
& 77988839780392752953564383134591810957018118523987\,,
\end{align*}
and setting $s^+ = s^- + 2/10^{101}$ to be the value
\begin{align*}
s^+ =
0.&531280506277205141624468647368471785493059109018398 \\
& 77988839780392752953564383134591810957018118523989\,.
\end{align*}

\begin{table}
\begin{equation*}
\begin{tabular}{|r|r|}
\hline
$n$  & $s_n$ \qquad\qquad\qquad\qquad\quad\qquad\qquad\qquad\qquad\qquad\qquad \\
\hline 
$18$ & 0.531280506277205141624468647368471785493059109018398779888397803927529535645 \cr
$$ & 596972005085668529391352118806494054592120629038239974478243258576620540205 \cr
$19$ & 0.531280506277205141624468647368471785493059109018398779888397803927529535643 \cr
$$ & 831345931151408384198942403518425963034455124305471103063941900681921725781 \cr
$20$ & 0.531280506277205141624468647368471785493059109018398779888397803927529535643 \cr
$$ & 831345918109570144457186603287266737112934351614056377793361034907544181115 \cr
$21$ & 0.531280506277205141624468647368471785493059109018398779888397803927529535643 \cr
$$ & 831345918109570181185239840988322512589524907498366765561230541095944497891 \cr
$22$ & 0.531280506277205141624468647368471785493059109018398779888397803927529535643\cr
$$ & 831345918109570181185239880428057259226147992212780800516214656456345194120 \cr
$23$ & 0.531280506277205141624468647368471785493059109018398779888397803927529535643 \cr
$$ &    831345918109570181185239880428057243075187635944921448427780108909724612227 \cr
$24$ & 0.531280506277205141624468647368471785493059109018398779888397803927529535643 \cr
$$ & 831345918109570181185239880428057243075187633422389339330546198723829886067 \cr
$25$ & 0.531280506277205141624468647368471785493059109018398779888397803927529535643 \cr
$$ & 831345918109570181185239880428057243075187633422389339480822309014454563836 \cr
\hline
\end{tabular}
\end{equation*}
\caption{Approximations $s_n\approx \text{dim}(E_2)$; each $s_n$ is a zero of a truncation $\D_n$ (formed using only periodic points of period $\le n$) of the function $\D$}
\end{table}

We then claim:

\begin{theorem}\label{e2theorem}
The Hausdorff dimension of $E_2$ lies in the interval $(s^-,s^+)$.
\end{theorem}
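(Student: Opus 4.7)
The plan is to apply the intermediate value theorem to $\D$. Since $\D$ is entire in $s$ and has $\text{dim}(E_2)$ as its unique zero in the interval $(0,1)$, it suffices to produce rigorous enclosures of $\D(s^-)$ and $\D(s^+)$ having opposite signs. Everything therefore reduces to certifying, for $s=s^\pm$, the value of $\D(s)$ to an absolute accuracy substantially better than $10^{-101}$.

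For each $s=s^\pm$ I would decompose
\[
\D(s) = \D_P(s) + \sum_{n=P+1}^{Q} \delta_n(s) + \sum_{n=Q+1}^{\infty} \delta_n(s),
\]
with $P=25$ and $Q=28$, and handle the three pieces by three different mechanisms. The truncation $\D_P(s)$ is evaluated \emph{exactly}: enumerate all $T$-periodic points in $E_2$ of period at most $25$, use the trace formula (\ref{traceformula}) to compute $\text{tr}(\l_s^n)$ for $1\le n\le 25$, and recover $\delta_1(s),\ldots,\delta_{25}(s)$ via the Plemelj--Smithies identities that express the Taylor coefficients of $\det(I-z\l_s)$ in terms of traces of powers. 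The middle block $\sum_{n=P+1}^{Q}\delta_n(s)$ is controlled by Proposition \ref{taylorboundprop}: with parameters $M=400$ and $N=600$, the upper computed Taylor bound $\beta_{n,N,+}^{M,+}(s)$ majorises $|\delta_n(s)|$ for $26\le n\le 28$ and is itself computable to arbitrary precision from the Hardy norms (\ref{hardynormexplicit}), obtained by numerical integration on the boundary circle. The far tail $\sum_{n>Q}|\delta_n(s)|$ is majorised by the Euler bound of Proposition \ref{eulerprop}, namely $\sum_{n>Q} K_s^n E_n(h)$, whose summands decay at super-exponential rate $h^{n(n+1)/2}$ and which can therefore be dominated by any convenient closed-form majorant of its leading term.

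Before the analytic estimates can be run, the geometric data must be fixed and certified. I would select a concrete real-centred open disc $D$ containing $[0,1]$ while avoiding $\{-1,-2\}$, verify the admissibility condition $T_1(D)\cup T_2(D)\subset D$ by an elementary M\"obius calculation (the image of a disc under a M\"obius transformation is again a disc, with explicit centre and radius), and in the same calculation extract a rigorous upper bound on the contraction ratio $h=\rho'/\rho$. Rigorous upper bounds on the weights $\|w_{i,s}\|_\infty=\sup_{z\in D}|z+i|^{-2s}$ follow from the maximum principle, reducing the computation to a one-variable optimisation on $\partial D$. These feed into the explicit constant $K_s$ of (\ref{ksdefn}) and thence into every subsequent bound.

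The one real obstacle is calibration. The combined tail bound must fall below $10^{-101}$ at \emph{both} $s=s^\pm$, and the decisive quantity is the contraction ratio $h$: with a choice of $D$ giving $h\approx 0.511$, as signalled in the remark following Corollary \ref{jqnscor}, the Euler bound $E_n(h)$ is already of order $10^{-109}$ at $n=29$, comfortably below target, which is precisely why the split at $Q=28$ is made. Once $(D,h,K_s)$ and the periodic-point list up to period $25$ are in hand, the proof reduces to the explicit 150-digit-precision verification that $\D_P(s^-)$ minus the total tail bound at $s^-$ is strictly positive while $\D_P(s^+)$ plus the total tail bound at $s^+$ is strictly negative (or the symmetric sign pattern, depending on the sign of $\D'$ at its zero in $(0,1)$). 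Continuity of $\D$ together with uniqueness of its zero in $(0,1)$ then places $\text{dim}(E_2)$ in $(s^-,s^+)$.
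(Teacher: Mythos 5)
Your proposal matches the paper's proof essentially step for step: the same three-way decomposition of $\D(s^\pm)$ into an exact truncation $\D_{25}$ from periodic points of period $\le 25$, a middle block on $26\le n\le 28$ bounded via the upper computed Taylor bounds of Proposition \ref{taylorboundprop} with $M=400$, $N=600$, and a far tail on $n\ge 29$ bounded via the Euler estimate of Proposition \ref{eulerprop}, concluded by an intermediate value argument. The only details you leave unresolved are the concrete admissible disc (the paper takes $c$ to be the largest real root of an explicit degree-$7$ polynomial, optimised so that $h\approx 0.5113$) and the sign pattern, which the paper resolves as $\D(s^-)<0<\D(s^+)$ using that $\D$ is increasing.
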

\begin{proof}
We will show that $\D(s^-)$ and $\D(s^+)$ take opposite signs, 
and deduce that $\dim(E_A)$, as the zero of $\D$, lies between $s^-$ and $s^+$.


Let $D\subset\mathbb{C}$ be the open disc centred at $c$, of radius $\rho$,
where $c$ is the largest real root of the polynomial
$$
128c^7 + 768c^6 + 1296c^5  - 192c^4 - 1764c^3 - 108c^2 + 819c -216   \,,
$$
so that
$$c\approx
0.758687144013554292899790137015621955739402945444266741967051997691009 \,,
$$
and
\begin{equation}\label{crho}
\rho=
\frac{-c+\sqrt{-6c+5c^2+12c^3+4c^4}}{2c}\,,
\end{equation}
so that
$$
\rho\approx
0.957589818521375342814351002388265920293251603461349541441037951859499 \,.
$$
The relation (\ref{crho}) ensures that $T_1(c-\rho)$ and $T_2(c+\rho)$ are equidistant from $c$, and this common distance is denoted by $\rho' = T_1(c-\rho) - c = c - T_2(c+\rho)$, so that
\begin{equation*}\label{rhoprime}
\rho' \approx 
0.48960063348666271539624547964205669003751747416510762619582637319401 \,.
\end{equation*}
The specific choice of $c$ is to ensure that the 
contraction ratio $h = \rho'/\rho$ is minimised, taking the value
\begin{equation*}
h = \frac{\rho'}{\rho} \approx
0.51128429314616176482942956363790038479511374855036304746799036536341 \,.
\end{equation*}

\begin{figure}[!h]
\begin{center}
 \includegraphics[]{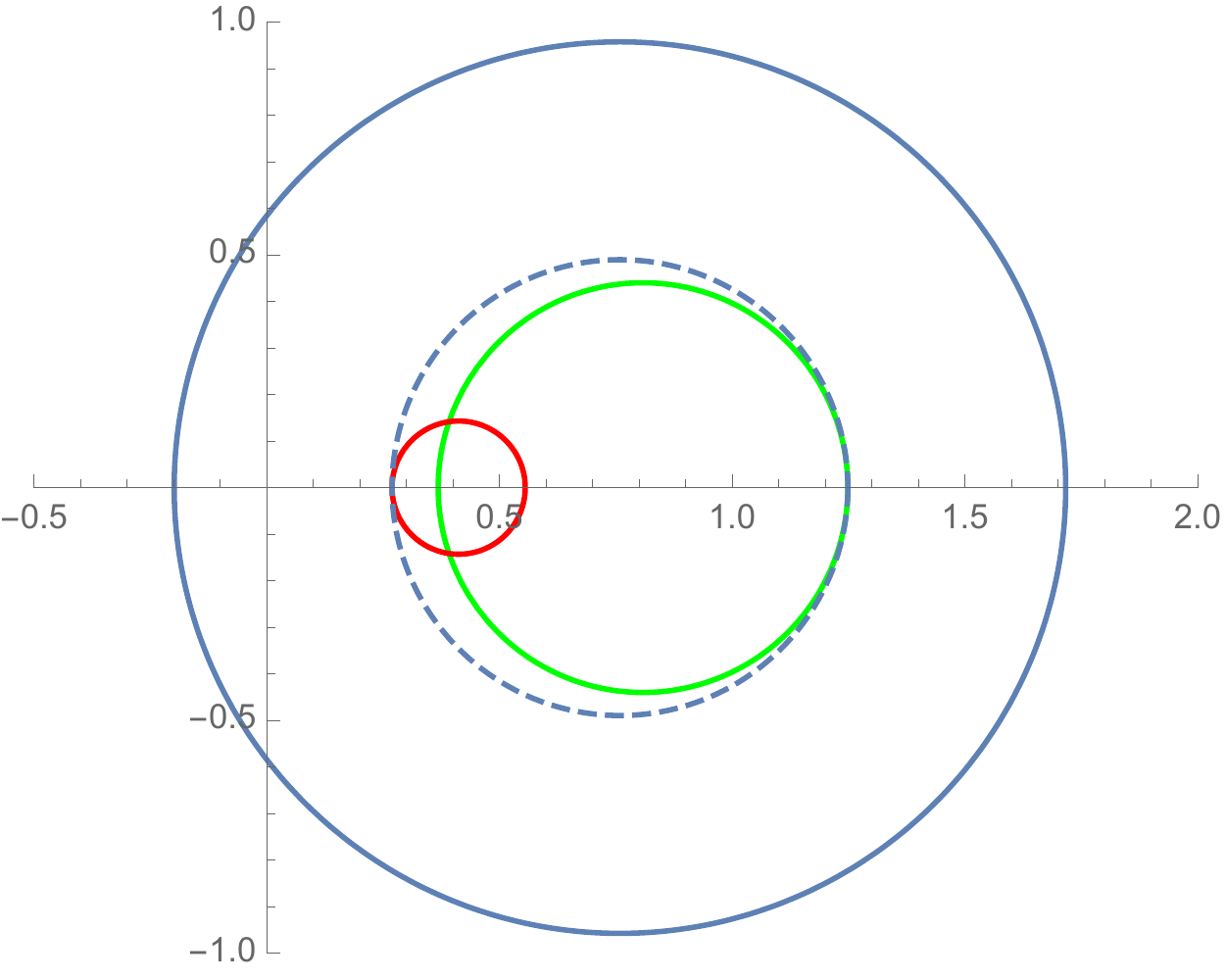}
\caption{Inner disc $D'$ (dashed) contains images $T_1(D)$, $T_2(D)$ of the outer disc $D$, in the rigorous bound on the dimension of $E_2$}
\end{center}
\end{figure}

Having computed the points of period up to $P= 25$ we can 
form the functions
$s\mapsto \delta_n(s)$ for $1\le n\le 25$,
and evaluate these at $s=s^-$
(cf.~Table 2)  to give
\begin{equation}\label{sminus22}
\D_{25}(s^-) = 1+\sum_{n=1}^{25} \delta_n(s^-) =
(-1.584605810787991617286291643870\ldots)
\times 10^{-101}
< 0\,,
\end{equation}
and at $s=s^+$ to give
\begin{equation}\label{splus22}
\D_{25}(s^+) =
1+\sum_{n=1}^{25} \delta_n(s^+) =
(1.454514082498475271478438451769\ldots)
 \times 10^{-101} >0\,.
\end{equation}

We now aim to show that the  approximation $\D_{25}$ is close enough to 
$\D$ for (\ref{sminus22}) and (\ref{splus22}) to imply, respectively, the negativity of $\D(s^-)$
and the positivity of $\D(s^+)$.
In other words, we seek to bound the tail $\sum_{n=26}^\infty \delta_n(s)$, and this will
be achieved by bounding the individual
Taylor coefficients $\delta_n(s)$, for $n\ge 26=P+1$.
It will turn out that for $n\ge29$ the cruder Euler bound on $\delta_n(s)$ is sufficient,
while for $26\le n\le 28$ we will use the Taylor bounds described in \S \ref{taylorsection}.
More precisely, for $P+1=26\le n\le 28=Q$ we will use the upper computed Taylor bound\footnote{As will be noted shortly,
the upper computed Taylor bound we use agrees with the corresponding Taylor bound to over 200 decimal digits,
so in particular the two quantities are indistinguishable at the 150 digit precision level of these computations.}
$\beta_{n,N,+}^{M,+}(s)$ for suitable $M,N\in\N$.

Henceforth let $Q=28$, $M=400$, $N=600$ (so that in particular $Q\le M\le N$, as was assumed throughout \S \ref{taylorsection})
and consider the case $s=s^-$.

We first evaluate\footnote{As described in \S \ref{cabsection}, 
(\ref{hardynormexplicit}) can be readily evaluated to
arbitrary precision using numerical integration; for this particular computation the precision level used was 150 decimal places.}  
the $H^2(D)$ norms of the monomial images $\l_{s} (m_k)$ for $0\le k\le N=600$.
These norms are decreasing in $k$;
Table 3 contains the first few evaluations, for $0\le k\le 10$,
while for $k= 600$ we have
$$
\| \l_{s}(m_{600}) \|= 
(2.297607298251023508986187604945746\ldots) \times 10^{-176} \,.
$$

Using these norms $\|\l_s(m_k)\|$  we then evaluate, for $1\le n\le M=400$,  the
upper computed approximation bounds
$\alpha_{n,N,+}(s) = \alpha_{n,600,+}(s)$ defined (cf.~(\ref{alphanNplus})) by\footnote{Note that 
$h\approx 0.511284$ and $N=600$,
so $\frac{h^{2(N+1)}}{1-h^2} \le 8.8 \times 10^{-351}$.
Moreover (\ref{weightboundsrange3}) gives
$\sum_{i=1}^2 \|w_{i,s}\|_\infty \le 1.81$, thus
$ (\sum_{i=1}^2 \|w_{i,s}\|_\infty)^2 \frac{h^{2(N+1)}}{1-h^2} \le 2.9 \times 10^{-350}$.
Combining these bounds with the values taken by $\alpha_{n,N,+}(s) $, 
it follows that for $1\le n\le 400$, 
the approximation bound $\alpha_n(s) = ( \sum_{k=n-1}^\infty  \|\l_{s} (m_k)\|^2 )^{1/2}$ agrees with both computed approximation bounds
$\alpha_{n,N,-}(s)$ and $\alpha_{n,N,+}(s)$ to at least 200 decimal places, a level well beyond the
desired precision used in these calculations.} 
$$
\alpha_{n,N,+}(s) 
= \left( \sum_{k=n-1}^N \| \l_{s}(m_k)\|^2 
+ \left(\sum_{i=1}^2 \|w_{i,s}\|_\infty\right)^2 \frac{h^{2(N+1)}}{1-h^2}\right)^{1/2} \,.
$$
These bounds are decreasing in $n$; Table 4 contains the first few evaluations, for $1\le n\le 10$,
while for $n=400$ we have
$$
\alpha_{400,600,+}(s) = 
(3.806826780744825698066314723072781\ldots) \times 10^{-147} \,.
$$

The upper computed approximation bounds $\alpha_{n,600,+}(s)$ are then used to form the 
upper computed Taylor bounds\footnote{The difference
$ \beta_{n,N,+}^{M,+}(s) -  \beta_{n,N,+}^{M,-}(s)
=
\sum_{l=0}^{n-1} J_{Q,N,s}^{n-l}\, \beta_{l,N,+}^{M,-}(s)\, h^{M(n-l)} E_{n-l}(h)$ is smaller than
$1.86 \times 10^{-210}$ for $26\le n\le 28=Q$, so in fact the upper and lower computed Taylor bounds,
and the Taylor bound $\beta_{n,N,+}^M(s)$, agree to well beyond the 150 decimal place precision used in these computations.}
 $\beta_{n,N,+}^{M,+}(s)
=  \beta_{n,N,+}^{M,-}(s) +  \sum_{l=0}^{n-1} J_{Q,N,s}^{n-l}\, \beta_{l,N,+}^{M,-}(s)\, h^{M(n-l)} E_{n-l}(h)$,
where
$$
\beta_{n,N,+}^{M,-}(s) = \beta_{n,600,+}^{400,-}(s) =  \sum_{i_1<\ldots< i_n\le 400} \prod_{j=1}^n \alpha_{i_j,600,+}(s)\,,
$$
which for $26\le n\le 28=Q$ are\footnote{See also Table 6
for computations of $\beta_{n,N,+}^{M,+}(s)$ for $1\le n\le 28=Q$.}
$$
\beta_{26,N,+}^{M,+}(s)
=
(7.0935010683530957339350457686786431427508\ldots) \times 10^{-103},
$$
$$
\beta_{27,N,+}^{M,+}(s)
=
(7.0379118021870691622913562125699156503586\ldots) \times 10^{-111},
$$
$$
\beta_{28,N,+}^{M,+}(s)
= 
(3.5360715444914082167026977943200738452867\ldots) \times 10^{-119},
$$
so in particular Proposition \ref{taylorboundprop} gives
\begin{equation}\label{intermediateE2}
\sum_{n=26}^{28} |\delta_n(s)|
\le
\sum_{n=26}^{28} \beta_{n,N,+}^{M,+}(s)
< 7.1
\times 10^{-103} .
\end{equation}

It remains to derive the Euler bounds on the Taylor coefficients $\delta_n(s)$ for $n\ge 29$.
For $s>0$, the functions $w_{1,s}(z) = 1/(z+1)^{2s}$ and $w_{2,s}(z)=1/(z+2)^{2s}$ have maximum modulus on $D$ when $z=c-\varrho$, so 
\begin{equation}\label{weightbound}
\|w_{1,s}\|_\infty = 1/(1+c-\varrho)^{2s}\quad \text{and} \quad \|w_{2,s}\|_\infty = 1/(2+c-\varrho)^{2s}\,.
\end{equation}

A computation using (\ref{weightbound}) gives
\begin{equation}\label{weightboundsrange1}
\|w_{1,s}\|_\infty 
\le 
1.2657276413750668025007241047661655434034644495987711959332997
\end{equation}
and
\begin{equation}\label{weightboundsrange2}
\|w_{2,s}\|_\infty 
\le  
0.5351507690357290789991731014616306223833750046974228167583536
\,,
\end{equation}
thus
\begin{equation}\label{weightboundsrange3}
\|w_{1,s}\|_\infty + \|w_{2,s}\|_\infty 
\le 
1.8008784104107958814998972062277961657868394542961940127
\,,
\end{equation}
and therefore $K_s= (\|w_{1,s}\|_\infty + \|w_{2,s}\|_{\infty})/( h \sqrt{1-h^2})$ 
is bounded by
\begin{equation}
K_s 
\le  
4.098460062897625162727128104751085223751087056801141844 \,.
\end{equation}

Now $|\delta_n(s)| \le K_s^n E_n(h)$, and we readily compute (see also Table 5) that
$$
K_s^{29} E_{29}(h) 
< 3.991837779947559
\times 10^{-109}\,,
$$
$$
K_s^{30} E_{30}(h) 
< 2.976234382308237
\times 10^{-117}\,,
$$
and we easily bound
\begin{equation}\label{easybound2}
\left| \sum_{n=29}^\infty \delta_n(s) \right| \le
\sum_{n=29}^\infty K_s^n E_n(h) <  4 \times 10^{-109}\,.
\end{equation}

Combining (\ref{easybound2})
with (\ref{intermediateE2}) gives, for $s = s^-$,
\begin{equation}\label{easyboundcombined}
\left| \sum_{n=26}^\infty \delta_n(s) \right| 
< 7.2 \times 10^{-103}\,.
\end{equation}

Combining (\ref{easyboundcombined}) with
(\ref{sminus22}) 
then gives
\begin{equation}
\D(s^-) = 1+\sum_{n=1}^\infty \delta_n(s^-) < 0 \,.
\end{equation}

It remains to show that $\D(s^+)$ is positive.
In view of (\ref{splus22}), 
for this it is sufficient to show that $\left| \sum_{n=26}^\infty \delta_n(s) \right| < 10^{-101}$
for $s=s^+$.
In fact the stronger inequality (\ref{easyboundcombined}) (which we have proved for $s=s^-$) can also be established
for $s=s^+$, using the same general method as for $s=s^-$,
since the intermediate computed values for the
norms $\|\l_s(m_k)\|$, computed approximation bounds $\alpha_{n,N,+}(s)$, computed Taylor bounds
$\beta_{n,N,+}^{M,+}(s)$,
and Euler bounds $K_s^nE_n(h)$,
are sufficiently close to those for $s=s^- = s^+ - 2/10^{101}$. 
Combining (\ref{splus22}) with inequality (\ref{easyboundcombined}) for $s=s^+$ gives
the required positivity
\begin{equation}
\D(s^+) = 1+\sum_{n=1}^\infty \delta_n(s^+) > 0\,.
\end{equation}

The map $s\mapsto \D(s)$ is continuous and increasing, so the fact that $\D(s^-) < 0 < \D(s^+)$
implies that its unique zero (which is equal to the dimension) is contained in $(s^-,s^+)$.
\end{proof}

\begin{remark}
If, as in Theorem \ref{e2theorem}, our aim is to rigorously justify 100 decimal places of the computed approximation $s_P$
to the Hausdorff dimension, then roughly speaking $P$ should be chosen so that the modulus of the
tail $\sum_{n=P+1}^\infty \delta_n(s)$ can be shown to be somewhat smaller than $10^{-100}$ for $s\approx s_P$. Since $|\delta_n(s)|$ is bounded above by the upper computed Taylor bound $\beta_{n,N,+}^{M,+}(s)$, the fact that
$\beta_{26,N,+}^{M,+}(s) < 7.1 \times 10^{-103}$ (see Table 6) for suitably large $M, N$, together with the rapid decay 
(as a function of $n$) of these bounds, suggests that we may choose $P=25$, i.e.~that it suffices to explicitly locate the periodic points of period $\le 25$.

The choice of the value $Q$ is relatively unimportant, as the upper computed Taylor bounds are only slightly more time consuming to compute than the (instantaneously computable) Euler bounds; in the proof of Theorem  \ref{e2theorem} we chose $Q$ such that the Euler bounds $K_s^nE_n(h)$ were substantially smaller than $10^{-100}$ for $n>Q$ (our choice $Q=28$ has this property, as does any larger $Q$, and indeed the choice $Q=27$ may also be feasible, cf.~Table 5).

The values $M$ and $N$ 
are chosen large enough to ensure that the bound (\ref{Taylorapproximationbound}) on $|\delta_n(s)|$ is rendered essentially as sharp as possible using our method (see Proposition \ref{alphaapproximationbound}) of bounding approximation numbers by approximation bounds; equally, the values $M$ and $N$ are of course chosen small enough to allow the $\beta_{n,N,+}^{M,+}(s)$ to be evaluated in reasonable time.
\end{remark}

\newpage

\begin{table}
\begin{equation*}
\begin{tabular}{|r|r|}
\hline
$n$  & $\delta_n(s)$ \qquad\qquad \qquad \qquad\qquad\qquad\qquad\qquad\qquad\qquad \\
\hline 
$0$ & $1.00000000000000000000000000000000000000000000000000000000000000000$ \cr
$1$ & $-0.76853713973783664059555880616494947204728086574720496608180647371$ \cr
$2$ & $-0.26021976366093635437716029700462536967772836185911363417403187305$ \cr
$3$ & $0.02765000991360418692432023659242068499500195127534488833324814033$ \cr
$4$ & $0.00112374639478016294259719123593672797015144554465624446191255585$ \cr
$5$ & $-0.0000167586893262829053963800867331298214048355450207764533921201$ \cr
$6$ & $-9.4420708961650542507455077292536505589082088322403413391248\times 10^{-8}$ \cr
$7$ & $2.002631154264594909155061001947400470978400350528119075400\times 10^{-10}$ \cr 
$8$ & $1.608231764929372179126081732895703844557686618425008678027 \times 10^{-13}$ \cr
$9$ &  $-4.893556025044534292717368610780157833682056735012684894922\times 10^{-17}$\cr
$10$ & $-5.651862783135703772626682291328447783083215443130743201938\times 10^{-21}$\cr
$11$ & $2.479739513988220884083251961840541108885795134827792410316\times 10^{-25}$\cr
$12$ & $4.136401121594147971038636851634368411897631129671577551043\times 10^{-30}$\cr
$13$ & $-2.624756973891155869061345045877184074663200387233753736326 \times 10^{-35}$\cr
$14$ & $-6.338941892590978104708773275720546500369680606966992807613\times 10^{-41}$\cr
$15$ & $5.828730628244270965574851653454290059269126664657684771853 \times 10^{-47}$\cr
$16$ & $2.041279162245973098261089683937621906304968188235420213343\times 10^{-53}$\cr
$17$ & $-2.723457305394335826243564087510129051800792696839009712149\times 10^{-60}$\cr
$18$ & $-1.384617032922521104261197591114142447361756695512763047462 \times 10^{-67}$\cr
$19$ & $2.682974662699446094806576747549474738085235119849542148518 \times 10^{-75}$\cr
$20$ & $1.981785501971166402977117745705012463041957402989929807212 \times 10^{-83}$\cr
$21$ & $-5.581047861819085366787152065083481128824923252068053906083 \times 10^{-92}$\cr
$22$ & $-5.99310412272224270828369069621010481279832938275818217131 \times 10^{-101}$\cr
$23$ & $2.45423524572073669786403014748119272764064394193220008396 \times 10^{-110}$\cr
$24$ & $3.83313875710563588641117264949062942911961150094959790393 \times 10^{-120}$\cr
$25$ & $-2.28353558134974299687217160340929697313195978714612008350 \times 10^{-130}$\cr
\hline
\end{tabular}
\end{equation*}
\caption{Exact (to the given precision) Taylor coefficients $\delta_n(s)$ for the determinant 
$\det(I-z\l_s)=1+\sum_{n=1}^\infty \delta_n(s) z^n$ for $E_2$ transfer operator $\l_s$ with $s=s^-$}
\end{table}

\begin{table}[H]
\begin{equation*}
\begin{tabular}{|r|r|}
\hline
$k$  & $\|\l_{s}(m_k)\|$ \qquad\qquad \qquad\qquad\qquad\qquad\qquad\qquad\qquad \\
\hline  
$0$ & 1.0270790783376427840070677716704413443556765790531396305598028764891 \cr
$1$ &  0.3937848239109563523505359783093188356154137707117445532439663747781  \cr
$2$ &  0.1714591180108060752265529053281347472947978460219396035391070667691\cr
$3$ &  0.0784792797693053045975192814445601433860119013766718128894674834037 \cr
$4$ &   0.0368985150737907248938351875080596507139356576758391651885254166051 \cr
$5$ &   0.0176517923866933707140642945427091399723431868286590018130953901715 \cr
$6$ &    0.0085477463829669713632455215487177327086334690252589671713112735110\cr
$7$ &   0.0041762395195693491669377402131475622078401074275749884365926135321 \cr
$8$ &   0.0020541561464629266556123666395075007822413063382433235450055746854 \cr
$9$ &    0.0010155981305058227350650668511905652569101368771929481102954501965\cr
$10$& 0.0005041555520431887383182315523421205104649185947907910778866174462\cr
\hline
\end{tabular}
\end{equation*}
\caption{$H^2(D)$ norms $\|\l_{s}(m_k)\|$ for $E_2$ transfer operator $\l_{s}$ with $s=s^-$, and
disc $D$
centred at $c\approx 0.758687$, of radius $\rho\approx 0.957589$}
\end{table}

\vskip 2cm

\begin{table}[H]
\begin{equation*}
\begin{tabular}{|r|r|}
\hline
$n$  & $\alpha_{n,N,+}(s)$ \qquad \qquad\qquad\qquad\qquad\qquad\qquad\qquad \\
\hline 
$1$ &  1.1168188427493689387528468664326403365355467885350235197794937054219 \cr
$2$ &  0.4386261441833328551532057324432712062653963332311641747430735557039 \cr
$3$ &  0.1932004317245564565674981131652477003552483794394786484356380783895 \cr
$4$ &  0.0890403148551906045843926762042532922519090868320365095369073804490\cr
$5$ &  0.0420616252230294091406255836554145185951240978356520620177902539293 \cr
$6$ &   0.0201910847096391145836053749493573987118330733550628154025906529456\cr
$7$ &   0.0098027612073790924969564942497359805350512687186310168243371528657\cr
$8$ &   0.0047989747927418270016992324939068507919767494168737399281723990294\cr
$9$ &   0.0023641452020886181354412370986078224447913391845724242369671517436 \cr
$10$ &  0.0011703098147530048368486863035363234141272479119157896271724328508\cr
\hline
\end{tabular}
\end{equation*}
\caption{Upper computed approximation bounds $\alpha_{n,N,+}(s)$ for $E_2$ transfer operator $\l_{s}$ 
with $s=s^-$, $N=600$, and disc $D$
centred at $c\approx 0.758687$, of radius $\rho\approx 0.957589$}
\end{table}

\begin{table}[H]
\begin{equation*}
\begin{tabular}{|r|r|}
\hline
$n$  & $K_{s}^n E_n(h)$ \qquad \qquad\qquad\qquad\qquad\qquad\qquad\qquad \\
\hline 
$26$ &  $1.7205402918728479471042338789554711763326940740466743 \times 10^{-86}$ \cr 
$27$ &  $9.5978010692386084808038394023982841330869065861226330 \times 10^{-94}$ \cr
$28$ &  $2.737417814947540988901740511033648063467122791471394 \times 10^{-101}$ \cr
$29$ &  $3.991837779947558814663544901589857709951099663953540 \times 10^{-109}$ \cr
$30$ &   $2.976234382308236859886112971018657684658758908913873\times 10^{-117}$\cr
$31$ &   $1.134550484615336330129091070266090192517568093692057\times 10^{-125}$\cr
$32$ &   $2.211276104496105402944501365002379392554065222342807\times 10^{-134}$\cr
\hline
\end{tabular}
\end{equation*}
\caption{Euler bounds $K_{s}^n E_n(h)$ (on the $n^{th}$ Taylor coefficient of the determinant 
for the $E_2$ transfer operator $\l_s$) with $s=s^-$}
\end{table}

\bigskip

\begin{table}
\begin{equation*}
\begin{tabular}{|r|r|}
\hline
$n$  & $\beta_{n,N,+}^{M,+}(s)$ \qquad\qquad \qquad \qquad\qquad\qquad\qquad\qquad\qquad\qquad \\
\hline 
$0$ & $1.00000000000000000000000000000000000000000000000000000000000000000000000000$ \cr
$1$ & $1.91923648979580309318951635180234393904884374850026688921303476745864277943$ \cr
$2$ & $1.09811675194206604762230704346732795997751970929683510044470721043734001309$ \cr
$3$ & $0.24618999584155235513565815243210418520583365378089116293710254687241434795$ \cr
$4$ & $0.02398559740297469793182812221795461172137513819594467292973150895628420238$ \cr
$5$ & $0.00106919598571977874103212434018320434942648472790810029433803585219678501$ \cr
$6$ & $0.00002245831360965568426299680358374853210939596804716334173441483413901923$ \cr
$7$ & $2.2642019462375962430662506716612064307152569131758370772288306840900 \times 10^{-7}$ \cr 
$8$ & $1.1092419528871585130899796268449651654078217715387698682639501376708 \times 10^{-9}$ \cr
$9$ &  $2.663650269994059350891751457108890432732071400321264474469330002798 \times 10^{-12}$\cr
$10$ & $3.155171165530321941301909639176345854820087927706592194812388623174 \times 10^{-15}$\cr
$11$ & $1.852432231426985677242256749394660424524281973738655903624698338156 \times 10^{-18}$\cr
$12$ & $5.410594019029701157763137174999660406055719684315663742414740542830 \times 10^{-22}$\cr
$13$ & $7.885051899585888435773423343552379506988548488916647635158418850050 \times 10^{-26}$\cr
$14$ & $5.747100233562844459509048233665882356972216861732638504210303895791 \times 10^{-30}$\cr
$15$ & $2.099041252743632552050904627419516338940376363311264696658378460074 \times 10^{-34}$\cr
$16$ & $3.847903057092197973673777897871275775937411069875824271304861796633 \times 10^{-39}$\cr
$17$ & $3.545294989432407670621821723745739978197914980574557158230527004120 \times 10^{-44}$\cr
$18$ & $1.643668789004361742194939215063268183353658869302130234108066601797 \times 10^{-49}$\cr
$19$ & $3.838399584352345469129330407144020664484419600898330810312866654442 \times 10^{-55}$\cr
$20$ & $4.519027888488147753152792404295333548952705689939991946764902639890 \times 10^{-61}$\cr
$21$ & $2.684346574656834154151019745874691411943090212034757600858330903379 \times 10^{-67}$\cr
$22$ & $8.050690502405021083882671470235302010905655390286812186449358629811 \times 10^{-74}$\cr
$23$ & $1.219830446701270894665408875558427194881417102614891830624858080153 \times 10^{-80}$\cr
$24$ & $9.342902106203197589981798759839115586201690686680609856085682409723 \times 10^{-88}$\cr
$25$ & $3.619108237222286228053279698772494015265793565703855372730270709162 \times 10^{-95}$\cr
$26$ & $7.09350106835309573393504576867864314275082021347185128923856949603 \times 10^{-103}$\cr
$27$ & $7.03791180218706916229135621256991565035863969280596747417493561373 \times 10^{-111}$\cr
$28$ & $3.53607154449140821670269779432007384528678228577107631018236474461 \times 10^{-119}$\cr
\hline
\end{tabular}
\end{equation*}
\caption{Upper computed Taylor bounds $\beta_{n,N,+}^{M,+}(s)$ for $E_2$ transfer operator $\l_{s}$ with $s=s^-$, 
$M=400$, $N=600$, and disc $D$
centred at $c\approx 0.758687$, of radius $\rho\approx 0.957589$}
\end{table}

\end{document}